\newtheorem{theorem}{Theorem}
\newtheorem{remark}[theorem]{Remark}
\newtheorem{proposition}[theorem]{Proposition}
\newtheorem{definition}[theorem]{Definition}
\newtheorem{example}[theorem]{Example}
\newcommand*\diff{\mathrm{d}}
\newcommand{\R}{\mathbb R}
\DeclareMathOperator*{\ints}{int}
\newcommand{\interior}{\ints \left(C^1(\overline{\Omega})_+\right)}
\numberwithin{theorem}{section}
\numberwithin{equation}{section}
\begin{document}

\title[Elliptic $p$-Laplacian systems with nonlinear boundary condition]{Elliptic $p$-Laplacian systems with nonlinear boundary condition}

\author[F. Borer]{Franziska Borer}
\address[F. Borer]{Technische Universit\"{a}t Berlin, Institut f\"{u}r Mathematik, Stra\ss e des 17. Juni 136, 10623 Berlin, Germany}
\email{borer@math.tu-berlin.de}

\author[S. Carl]{Siegfried Carl}
\address[S. Carl]{Institut f\"ur Mathematik, Martin-Luther-Universit\"at Halle-Wittenberg, 06099 Halle, Germany}
\email{siegfried.carl@mathematik.uni-halle.de}

\author[P. Winkert]{Patrick Winkert}
\address[P. Winkert]{Technische Universit\"{a}t Berlin, Institut f\"{u}r
Mathematik, Stra\ss e des 17. Juni 136, 10623 Berlin, Germany}
\email{winkert@math.tu-berlin.de}

\subjclass{26E25,35B38, 35J47, 35J57, 35J62,49J52}
\keywords{Clarke's gradient, elliptic systems, nonsmooth functionals, nonsmooth mountain-pass theorem, Steklov eigenvalues, sub-supersolution approach, trapping region}

\begin{abstract}
	In this paper we study quasilinear elliptic systems given by
	\begin{equation*}
		\begin{aligned}
			-\Delta_{p_1}u_1
			& =-|u_1|^{p_1-2}u_1 \quad && \text{in } \Omega,\\
			-\Delta_{p_2}u_2
			& =-|u_2|^{p_2-2}u_2 \quad && \text{in } \Omega,\\
			|\nabla u_1|^{p_1-2}\nabla u_1 \cdot \nu
			&=g_1(x,u_1,u_2) && \text{on } \partial\Omega,\\
			|\nabla u_2|^{p_2-2}\nabla u_2 \cdot \nu
			&=g_2(x,u_1,u_2)  && \text{on } \partial\Omega,
		\end{aligned}
	\end{equation*}
	where $\nu(x)$ is the outer unit normal of $\Omega$ at $x \in \partial\Omega$, $\Delta_{p_i}$ denotes the $p_i$-Laplacian and $g_i\colon \partial\Omega \times\R\times\R\to\R$ are Carath\'{e}odory functions that satisfy general growth and structure conditions for $i=1,2$. In the first part we prove the existence of a positive minimal and a negative maximal solution based on an appropriate construction of sub- and supersolution along with a certain behavior of $g_i$ near zero related to the first eigenvalue of the $p_i$-Laplacian with Steklov boundary condition. The second part is related to the existence of a third nontrivial solution by imposing a variational structure, that is, $(g_1,g_2)=\nabla g$ with a smooth function $(s_1,s_2)\mapsto g(x,s_1,s_2)$. By using the variational characterization of the second eigenvalue of the Steklov eigenvalue problem for the $p_i$-Laplacian together with the properties of the related truncated energy functionals, which are in general nonsmooth, we show the existence of a nontrivial solution whose components lie between the components of the  positive minimal and the negative maximal solution.
\end{abstract}

\maketitle

\section{Introduction}

Let $\Omega\subseteq \mathbb{R}^N$ be a bounded domain with a $C^2$-boundary $\partial \Omega$. For $i=1,2$ and $1<p_i<\infty$ we consider the following $p_i$-Laplacian system with nonlinear boundary conditions
\begin{equation}\label{problem}
	\begin{aligned}
		-\Delta_{p_1}u_1
		& =-|u_1|^{p_1-2}u_1 \quad && \text{in } \Omega,\\
		-\Delta_{p_2}u_2
		& =-|u_2|^{p_2-2}u_2 \quad && \text{in } \Omega,\\
		|\nabla u_1|^{p_1-2}\nabla u_1 \cdot \nu
		&=g_1(x,u_1,u_2) && \text{on } \partial\Omega,\\
		|\nabla u_2|^{p_2-2}\nabla u_2 \cdot \nu
		&=g_2(x,u_1,u_2)  && \text{on } \partial\Omega,
	\end{aligned}
\end{equation}
where $\nu(x)$ is the outer unit normal of $\Omega$ at $x \in \partial\Omega$, $\Delta_{p_i}$ denotes the $p_i$-Laplacian given by
\begin{align*}
	\Delta_{p_i}u_i=\operatorname{div} \left(|\nabla u_i|^{p_i-2}\nabla u_i\right)\quad\text{for }u_i \in W^{1,p_i}(\Omega), \ i=1,2,
\end{align*}
and $g_i\colon \partial\Omega \times\R\times\R\to\R$ are Carath\'{e}odory functions that satisfy appropriate growth and structure conditions, see Sections \ref{Section3} and \ref{Section4} for the detailed assumptions.

We are interested in the multiplicity of solutions of the system \eqref{problem}. In the first part, under general local conditions on the vector field $(g_1,g_2)$, we prove the existence of a positive minimal and a negative maximal solution (see Definition \ref{def-minimal-maximal-solution}) by constructing suitable pairs of sub- and supersolution to the system \eqref{problem} using a specific behavior of $g_i$ near zero corresponding to the first eigenvalue of the $p_i$-Laplacian with Steklov boundary condition (see \eqref{Steklov-eigenvalue-problem}). In the second part of this paper we suppose a variational structure of the system \eqref{problem} which means that $(g_1,g_2)=\nabla g$ with a smooth function $(s_1,s_2)\mapsto g(x,s_1,s_2)$. Then, by means of the extremal positive and negative solutions obtained in the first part, we are going to show the existence of a third nontrivial solutions whose components lie between the components of the  positive minimal and the negative maximal solution of \eqref{problem}. The proof uses a variational characterization of the second eigenvalue of the Steklov eigenvalue problem for the $p_i$-Laplacian together with the properties of the corresponding truncated energy functionals. The main difficulty is the fact that the truncated energy functionals turn out to be nonsmooth independently of the smoothness of $\nabla g$. This situation is different to the scalar case and needs further investigations in terms of Clarke's generalized gradient of locally Lipschitz functionals.

Our work is motivated by the papers of Carl-Motreanu \cite{Carl-Motreanu-2015} and Winkert \cite{Winkert-2010}. In \cite{Carl-Motreanu-2015} the authors study a Dirichlet system of the form
\begin{equation}\label{problem20}
	\begin{aligned}
		-\Delta_{p_1}u_1
		& =f_1(x,u_1,u_2) \quad && \text{in } \Omega,\\
		-\Delta_{p_2}u_2
		& =f_2(x,u_1,u_2) \quad && \text{in } \Omega,\\
		u_1
		&=u_2=0&& \text{on } \partial\Omega,
	\end{aligned}
\end{equation}
where $f_i\colon \Omega \times\R\times\R\to\R$ are Carath\'{e}odory functions having a certain local behavior near zero. It is shown that the system \eqref{problem20} has at least three nontrivial solutions whereby the first and the second eigenvalue of the $p_i$-Laplacian with Dirichlet boundary condition have been used. On the other hand, in \cite{Winkert-2010}, a scalar equation with nonlinear boundary condition of the form
\begin{equation}\label{problem21}
	\begin{aligned}
		-\Delta_{p}u
		& =f(x,u)-\lambda |u|^{p-2}u \quad && \text{in } \Omega,\\
			|\nabla u|^{p-2}\nabla u \cdot \nu
		&=\lambda |u|^{p-2}u+g(x,u) && \text{on } \partial\Omega,
	\end{aligned}
\end{equation}
has been considered. Here, the nonlinearities $f\colon\Omega\times\R\to\R$ and $g\colon\partial\Omega \times\R\to \R$ are Carath\'eodory functions which are bounded on bounded sets and which satisfy appropriate conditions near zero and at infinity. If $\lambda$ is larger than the second eigenvalue of the eigenvalue problem of the $p$-Laplacian with Steklov boundary condition, then the existence of three nontrivial solutions has been shown whereby two of them have constant sign and the third one turns out to be sign-changing. In our paper we combine the ideas of both papers to show multiplicity of solutions for the coupled system given in \eqref{problem}. We also  refer to El Manouni-Papageorgiou-Winkert \cite{ElManouni-Papageorgiou-Winkert-2015} which extends problem \eqref{problem21} to more general, nonhomogeneous operators of type $(p,q)$.

As far as we know there are only few works for elliptic systems with nonlinear boundary condition and with a variational structure. In 2016, de Godoi-Miyagaki-Rodrigues \cite{deGodoi-Miyagaki-Rodrigues-2016} studied the following Laplacian system
\begin{equation}\label{problem22}
	\begin{aligned}
		-\Delta u+C(x)u
		& =f(x,u)\quad && \text{in } \Omega,\\
		\nabla u \cdot \nu
		&=g(x,u) && \text{on } \partial\Omega,
	\end{aligned}
\end{equation}
where
\begin{align*}
	C(x)=\begin{pmatrix}
		a(x) & b(x) \\ b(x) & c(x)
	\end{pmatrix}
\end{align*}
is a positive definite matrix for a.a.\,$x\in\Omega$ and the nonlinearities $f\colon \Omega\times \R^2\to \R^2$, $g\colon\partial \Omega \times \R^2\to \R^2$ satisfy suitable growth and structure conditions. The authors prove existence results for \eqref{problem22}  when resonance or nonresonance conditions  occur by using variational tools. For systems with nonlinear boundary conditions but without a variational structure we refer to the works by Guarnotta-Livrea-Winkert \cite{Guarnotta-Livrea-Winkert-2023} who developed a sub-supersolution method for variable exponent double phase systems and Frisch-Winkert \cite{Frisch-Winkert-2023} for boundedness, existence and uniqueness results for coupled gradient dependent elliptic systems, see also the paper of Guarnotta-Marano-Moussaoui \cite{Guarnotta-Marano-Moussaoui-2022} for singular convective systems based on perturbation techniques along with fixed point arguments.

Systems with homogeneous Neumann boundary conditions have been studied in the papers by Chabrowski \cite{Chabrowski-2011} by constrained minimization based on the concentration compactness principle, by Guarnotta-Marano \cite{Guarnotta-Marano-2021-a, Guarnotta-Marano-2021-b} getting infinitely many solutions for convection problems by appropriate pairs of sub-supersolution and by Motreanu-Perera \cite{Motreanu-Perera-2009} who studied $p$-Laplace systems via Morse theory. Finally, in case of systems with Dirichlet boundary conditions, we refer to the works by
Carl-Motreanu \cite{Carl-Motreanu-2017} for convective $p$-Laplace systems based on a sub-supersolution approach, de Morais Filho-Souto \cite{deMoraisFilho-Souto-1999} using the concentration compactness principle,  Gambera-Marano-Motreanu \cite{Gambera-Marano-Motreanu-2024} for $(p,q)$-problems via Brouwer's fixed point theorem,  Hai-Shivaji \cite{Hai-Shivaji-2004} for parametric $p$-Laplacian systems, Liu-Nguyen-Winkert-Zeng \cite{Liu-Nguyen-Winkert-Zeng-2023} for
coupled double phase obstacle systems involving nonlocal functions and convection terms, Marino-Winkert \cite{Marino-Winkert-2020} for existence and uniqueness results of convection systems, Motreanu-Moussaoui-Pereira \cite{Motreanu-Moussaoui-Pereira-2018} for $p$-Laplacian systems via  sub-supersolution method and the Leray-Schauder topological degree,
Motreanu-Vetro-Vetro \cite{Motreanu-Vetro-Vetro-2016, Motreanu-Vetro-Vetro-2018} for systems involving $(p,q)$-Laplacians, see also the references therein.

The paper is organized as follows.  In Section \ref{Section2} we present the main tools which are needed in the sequel including the properties of the eigenvalue problem for the $r$-Laplacian ($1<r<\infty$) with Steklov boundary condition. Section \ref{Section3} deals with the existence of extremal positive and negative solutions where positive (resp.\,negative) means that both components are positive (resp.\,negative). Finally, in Section \ref{Section4} we are going to assume a variational structure of \eqref{problem} and prove the existence of a third nontrivial solution whose components lie between the related components of the positive and the negative solution.

\section{Preliminaries}\label{Section2}

In this section we recall the main tools that will be needed in the sequel. For $1\leq r<\infty$ we denote by $L^r(\Omega)$ and $L^r(\Omega;\R^N)$ the usual Lebesgue spaces with norm $\|\cdot\|_r$  and by $W^{1,r}(\Omega )$ the corresponding Sobolev space with norm $\|\cdot\|_{1,r}=\|\nabla \cdot\|_r+\|\cdot \|_r$. We equip the spaces $\mathcal{V}_i:=W^{1,p_i}(\Omega)$ with the equivalent norms
\begin{align*}
	\|u\|_{1,p_i}=\left(\|\nabla u\|_{p_i}^{p_i}+ \|u\|_{p_i}^{p_i}\right)^{\frac{1}{p_i}}
	\quad\text{for all }u\in\mathcal{V}_i,
\end{align*}
where $1<p_1,p_2<\infty$. Moreover, we denote by $L^{r}(\partial \Omega)$ the boundary Lebesgue space with norm $\|\cdot \|_{r,\partial\Omega}$ for any $r\in [1,\infty ]$. For $s \in \R$, we set $s^{\pm}=\max\{\pm s,0\}$ and for $u \in  W^{1, r}( \Omega )$ we define $u^{\pm}(\cdot)=u(\cdot)^{\pm}$. We have
\begin{align*}
	u^{\pm} \in  W^{1,r} ( \Omega ), \quad |u|=u^++u^-, \quad u=u^+-u^-.
\end{align*}
The space $L^{p_i}(\Omega)$ is endowed with the natural partial ordering given by the positive cone
\begin{align*}
	L^{p_i}(\Omega)_+= \left\{u\in L^{p_i}(\Omega)\colon u(x) \geq 0 \text{ a.e.\,in }\Omega\right\},
\end{align*}
which implies a related partial ordering in its subspace $W^{1,p_i}(\Omega)$. The positive cone
\begin{align*}
	\mathcal{L}_+=L^{p_1}(\Omega)_+ \times L^{p_2}(\Omega)_+
\end{align*}
induces the componentwise partial ordering on the product space
\begin{align*}
	\mathcal{L}=L^{p_1}(\Omega) \times L^{p_2}(\Omega).
\end{align*}
This implies the componentwise partial ordering in the subspace $\mathcal{W}=\mathcal{V}_1\times \mathcal{V}_2$.

\begin{definition}
	We say that $(u_1,u_2)\in\mathcal{W}$ is a weak solution of problem \eqref{problem} if
	\begin{align}\label{defsol1}
		\int_\Omega |\nabla u_1|^{p_1-2}\nabla u_1 \cdot \nabla \varphi_1\,\diff x+\int_\Omega |u_1|^{p_1-2}u_1\varphi_1\,\diff x
		=\int_{\partial\Omega} g_1(x,u_1,u_2)\varphi_1\,\diff \sigma
	\end{align}
	and
	\begin{align}\label{defsol2}
		\int_\Omega |\nabla u_2|^{p_2-2}\nabla u_2\cdot \nabla \varphi_2\,\diff x+\int_\Omega |u_2|^{p_2-2}u_2\varphi_2\,\diff x
		=\int_{\partial\Omega} g_2(x,u_1,u_2)\varphi_2\,\diff \sigma
	\end{align}
	hold true for all $(\varphi_1,\varphi_2)\in\mathcal{W}$ and all the integrals in \eqref{defsol1} and \eqref{defsol2} are finite. Here, $\sigma$ denotes the $(N-1)$-dimensional Hausdorff surface measure on $\partial\Omega$.
\end{definition}

Next, we introduce the notion of weak sub- and supersolution to \eqref{problem}.

\begin{definition}\label{def-sub-supersolution}
	We say that $(\underline{u}_1,\underline{u}_2)$, $(\overline{u}_1, \overline{u}_2)\in \mathcal{W}$ form a pair of weak sub- and supersolution of problem \eqref{problem} if $\underline{u}_i\leq \overline{u}_i$ a.e.\,in $\Omega$ for $i=1,2$ and
	\begin{equation}\label{defsubsuper1}
		\begin{aligned}
			&\int_\Omega \left(|\nabla \underline{u}_1|^{p_1-2}\nabla \underline{u}_1\cdot \nabla \varphi_1+|\underline{u}_1|^{p_1-2}\underline{u}_1\varphi_1\right)\diff x
			-\int_{\partial\Omega} g_1(x,\underline{u}_1,w_2)\varphi_1\,\diff \sigma\\
			&+\int_\Omega \left(|\nabla \underline{u}_2|^{p_2-2}\nabla \underline{u}_2\cdot \nabla \varphi_2+|\underline{u}_2|^{p_2-2}\underline{u}_2\varphi_2
			\right)\diff x\\
			&-\int_{\partial\Omega} g_2(x,w_1,\underline{u}_2)\varphi_2\,\diff \sigma \leq 0
		\end{aligned}
	\end{equation}
	and
	\begin{equation}\label{defsubsuper2}
		\begin{aligned}
			&\int_\Omega \left(|\nabla \overline{u}_1|^{p_1-2}\nabla \overline{u}_1\cdot \nabla \varphi_1+ |\overline{u}_1|^{p_1-2}\overline{u}_1\varphi_1
			\right) \diff x
			-\int_{\partial\Omega} g_1(x,\overline{u}_1,w_2)\varphi_1\,\diff \sigma\\
			&+\int_\Omega \left(|\nabla \overline{u}_2|^{p_2-2}\nabla \overline{u}_2\cdot \nabla \varphi_2+ |\overline{u}_2|^{p_2-2}\overline{u}_2\varphi_2
			\right)\diff x\\
			&-\int_{\partial\Omega} g_2(x,w_1,\overline{u}_2)\varphi_2\,\diff \sigma \geq 0
		\end{aligned}
	\end{equation}
	for all $(\varphi_1,\varphi_2)\in\mathcal{W}$ with $\varphi_1,\varphi_2\geq 0$ a.e.\,in $\Omega$ and for all $(w_1,w_2)\in \mathcal{W}$ such that $\underline{u}_i\leq w_i\leq \overline{u}_i$  for $i=1,2$ and with all integrals in \eqref{defsubsuper1} and \eqref{defsubsuper2} to be finite.

	If $\underline{u}=(\underline{u}_1,\underline{u}_2)$, $\overline{u}=(\overline{u}_1,\overline{u}_2)$ is a pair of weak sub- and supersolution, then the order interval $[\underline{u},\overline{u}]=[\underline{u}_1,\overline{u}_1] \times [\underline{u}_2,\overline{u}_2]$ is called trapping region, whereby
	\begin{align*}
		[\underline{u}_i,\overline{u}_i]
		=\big\{u\in W^{1,p_i}(\Omega)\,:\,\underline{u}_i\leq u\leq \overline{u}_i\text{ a.e.\,in }\Omega\big\}.
	\end{align*}
\end{definition}

For $1<p_i<\infty$, $i=1,2$, let $A_{p_i}\colon \mathcal{V}_i\to \mathcal{V}_i^*$ be the operator given by
\begin{align}\label{operator-Api}
	\langle A_{p_i}(u_i),\varphi_i\rangle_{\mathcal{V}_i}=\int_{\Omega} |\nabla u_i|^{p_i-2}\nabla u_i\cdot \nabla \varphi_i \,\diff x
\end{align}
for $u_i,\varphi_i\in \mathcal{V}_i$, where $\langle \,\cdot\,,\,\cdot\,\rangle_{\mathcal{V}_i}$ denotes the duality pairing between $\mathcal{V}_i$ and its dual space $\mathcal{V}_i^*$. The following proposition summarizes the main properties of $A_{p_i}$, see, for example, Carl-Le-Motreanu \cite[Lemma 2.111]{Carl-Le-Motreanu-2007}.

\begin{proposition}\label{proposition-properties-operator}
	Let $p_i\in (1,\infty)$ and let $A_{p_i}\colon \mathcal{V}_i\to \mathcal{V}_i^*$  be given by \eqref{operator-Api}. Then $A_{p_i}$ is well-defined, bounded, continuous, monotone and of type \textnormal{(S$_+$)}, that is, $u_i^k\rightharpoonup u_i$ in $\mathcal{V}_i$ and $\limsup_{k\rightarrow\infty}\,\langle A_{p_i} (u_i^k), u_i^k-u_i\rangle\leq 0$ imply $u_i^k\rightarrow u_i$ in $\mathcal{V}_i$ for $i=1,2$.
\end{proposition}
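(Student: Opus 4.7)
My plan is to verify the five listed properties in turn. Throughout I would drop the index $i$ and write $p,A,\mathcal{V}$ for brevity, since both operators are treated identically.

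For well-definedness and boundedness, the starting point is Hölder's inequality with conjugate exponent $p'=p/(p-1)$: for any $u,\varphi\in\mathcal{V}$,
\begin{align*}
|\langle A(u),\varphi\rangle_{\mathcal{V}}|
\le \int_\Omega |\nabla u|^{p-1}|\nabla \varphi|\,\diff x
\le \|\nabla u\|_p^{p-1}\|\nabla \varphi\|_p
\le \|u\|_{1,p}^{p-1}\|\varphi\|_{1,p}.
\end{align*}
Hence $A(u)\in\mathcal{V}^*$ with $\|A(u)\|_{\mathcal{V}^*}\le \|u\|_{1,p}^{p-1}$, so bounded sets are mapped to bounded sets. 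For continuity, suppose $u^k\to u$ in $\mathcal{V}$, so in particular $\nabla u^k\to \nabla u$ in $L^p(\Omega;\R^N)$. The Nemytskii map $\xi\mapsto|\xi|^{p-2}\xi$ is continuous from $L^p(\Omega;\R^N)$ to $L^{p'}(\Omega;\R^N)$ (standard Krasnoselskii-type argument via a.e.\ convergence of a subsequence together with equi-integrability), and a further Hölder estimate then gives $\|A(u^k)-A(u)\|_{\mathcal{V}^*}\to 0$.

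Monotonicity follows from the well-known vector inequality
\begin{align*}
(|\xi|^{p-2}\xi-|\eta|^{p-2}\eta)\cdot(\xi-\eta)\ge 0\qquad\text{for all }\xi,\eta\in\R^N,
\end{align*}
with strict inequality whenever $\xi\ne\eta$. Integrating this pointwise inequality with $\xi=\nabla u$, $\eta=\nabla v$ yields $\langle A(u)-A(v),u-v\rangle_{\mathcal{V}}\ge 0$.

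The main point, and the only mildly delicate step, is the (S$_+$) property. Assume $u^k\rightharpoonup u$ in $\mathcal{V}$ and $\limsup_k \langle A(u^k),u^k-u\rangle_{\mathcal{V}}\le 0$. Since $A(u)\in\mathcal{V}^*$, weak convergence gives $\langle A(u),u^k-u\rangle_{\mathcal{V}}\to 0$, so subtracting yields
\begin{align*}
\limsup_{k\to\infty}\int_\Omega \bigl(|\nabla u^k|^{p-2}\nabla u^k-|\nabla u|^{p-2}\nabla u\bigr)\cdot(\nabla u^k-\nabla u)\,\diff x\le 0.
\end{align*}
By the pointwise inequality, the integrands are nonnegative, so this $\limsup$ is actually a limit equal to $0$ and the integrands converge to $0$ in $L^1(\Omega)$. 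Up to a subsequence they converge a.e., and by the strict version of the inequality this forces $\nabla u^k\to\nabla u$ a.e.\ in $\Omega$. To upgrade this to strong $L^p$-convergence of the gradients I would invoke Simon's inequalities, namely $c|\xi-\eta|^p\le(|\xi|^{p-2}\xi-|\eta|^{p-2}\eta)\cdot(\xi-\eta)$ when $p\ge2$, and the analogous weighted inequality with factor $(|\xi|+|\eta|)^{2-p}$ when $1<p<2$ combined with Hölder on $(|\nabla u^k|+|\nabla u|)^{p(2-p)/2}$; either way one obtains $\nabla u^k\to\nabla u$ in $L^p(\Omega;\R^N)$. The compact embedding $W^{1,p}(\Omega)\hookrightarrow L^p(\Omega)$ applied to the weakly convergent sequence gives $u^k\to u$ in $L^p(\Omega)$, and therefore $u^k\to u$ in $\mathcal{V}$ with the equivalent norm $\|\cdot\|_{1,p}$. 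Urysohn's subsequence principle removes the need to pass to a subsequence. The only real obstacle is handling the subquadratic case $1<p<2$ in Simon's inequality, which is why one typically splits cases there.
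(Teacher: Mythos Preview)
Your argument is correct and follows the standard route. The paper itself gives no proof of this proposition at all; it simply records the result and cites Carl--Le--Motreanu \cite[Lemma~2.111]{Carl-Le-Motreanu-2007}. So there is nothing to compare against beyond noting that you have supplied the full details the paper omits.

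One small remark on efficiency: the detour through a.e.\ convergence of the gradients is unnecessary. Once you know that
\[
\int_\Omega \bigl(|\nabla u^k|^{p-2}\nabla u^k-|\nabla u|^{p-2}\nabla u\bigr)\cdot(\nabla u^k-\nabla u)\,\diff x\longrightarrow 0,
\]
Simon's inequalities convert this directly into $\|\nabla u^k-\nabla u\|_p\to 0$ for the \emph{full} sequence (using, in the case $1<p<2$, the H\"older step you describe together with the boundedness of $\{\nabla u^k\}$ in $L^p$ coming from weak convergence). Hence neither the passage to a subsequence nor the Urysohn argument at the end is actually needed.
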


Next, we want to explain the notion of minimal and maximal constant sign solutions.

\begin{definition}\label{def-minimal-maximal-solution}
	An element $m \in \mathcal{W}$ is said to be a minimal positive solution of \eqref{problem} if $m$ is a positive solution of \eqref{problem} and if for any positive solution $u$ with $u \leq m$ it follows that $m=u$. Similarly, we define a maximal negative solution.
\end{definition}

Let $C^1(\overline{\Omega})$ be equipped with norm $\|\cdot\|_{C^1(\overline{\Omega})}$ and let $C^1(\overline{\Omega})_+$ be its positive cone defined by
\begin{align*}
	C^1(\overline{\Omega})_+=\left\{u \in C^1(\overline{\Omega}): u(x) \geq 0 \text{ for all } x \in \overline{\Omega}\right\}.
\end{align*}
This cone has a nonempty interior given by
\begin{align*}
	\interior=\left\{u \in C^1(\overline{\Omega})_+: u(x)>0 \text{ for all } x \in \overline{\Omega}\right\}.
\end{align*}

Let us recall some basic facts about the Steklov eigenvalue problem for the $r$-Laplacian with $r\in (1,\infty)$ which is given by
\begin{equation}\label{Steklov-eigenvalue-problem}
	\begin{aligned}
		-\Delta_r u
		& =-|u|^{r-2}u \quad &  & \text{in }\Omega, \\
		|\nabla u|^{r-2}\nabla u\cdot \nu
		& =\lambda |u|^{r-2}u& & \text{on }\partial\Omega.
	\end{aligned}
\end{equation}
From L\^{e} \cite{Le-2006} we know that the set of eigenvalues of \eqref{Steklov-eigenvalue-problem}, denoted by $\sigma(r)$, has a smallest element $\lambda_{1,r}$ which is positive, isolated, simple and can be characterized by
\begin{align*}
	\lambda_{1,r}=\inf_{u\in W^{1,r}(\Omega)}{\left\{\|\nabla u\|_r^r+\|u\|_{r}^r \,:\,\|u\|_{r,\partial\Omega}^r=1\right\}}.
\end{align*}
We further point out that every eigenfunction corresponding to the first eigenvalue $\lambda_{1,r}$ does not change sign in $\overline{\Omega}$. In fact it turns out that every eigenfunction associated to an eigenvalue $\lambda \neq \lambda_{1,r}$ changes sign on $\partial \Omega$.

In what follows we denote by $u_{1,r}$ the normalized (i.e., $\|u_{1,r}\|_{r, \partial \Omega}=1$) positive eigenfunction corresponding to $\lambda_{1,r}$. As shown in L\^{e} \cite{Le-2006}, thanks to the nonlinear regularity theory and the nonlinear maximum principle, we can suppose that $u_{1,r} \in \interior$. Additionally, due to the fact that $\lambda_{1,r}$ is isolated, the second eigenvalue $\lambda_{2,r}$ is well-defined by
\begin{align*}
	\lambda_{2,r}=\inf \left[\lambda \in \sigma(r)\,:\, \lambda>\lambda_{1,r} \right].
\end{align*}

Now, let $\partial B^{r, \partial \Omega}_1=\{u \in L^r(\partial\Omega)\,:\, \|u\|_{r, \partial \Omega}=1\}$ and $S_r=W^{1,r}(\Omega) \cap \partial B_1^{r, \partial \Omega}$. Then, due to Mart\'{\i}nez-Rossi \cite{Martinez-Rossi-2004}, we have a variational characterization of $\lambda_{2,r}$ given by
\begin{align*}
	\lambda_{2,r}= \inf_{\hat{\gamma} \in \hat{\Gamma}(r)} \max_{-1 \leq t \leq 1} \Big[ \|\nabla \hat{\gamma}(t)\|_r^r+\|\hat{\gamma}(t)\|_r^r \Big],
\end{align*}
where $\hat{\Gamma}(r)=\{ \hat{\gamma} \in C([0,1],S_r)\,:\, \hat{\gamma}(0)=-u_{1,r},\, \hat{\gamma}(1)=u_{1,r} \}$.

Next, we recall some basic notions in nonsmooth analysis that are required in the sequel. We refer to the monograph of Carl-Le-Motreanu \cite{Carl-Le-Motreanu-2007}. For a real Banach space $(X,\|\cdot\|_X)$, we denote by $X^*$ its dual space and by $\langle \cdot , \cdot \rangle$ the duality pairing between $X$ and $X^*$. A function $f\colon X\to\mathbb{R}$ is said to be locally Lipschitz if for every $x\in X$ there exist a neighborhood $U_x$ of $x$ and a
constant $L_x\geq 0$ such that
\begin{align*}
	| f(y)-f(z)| \leq L_x \| y-z\|_X \quad \text{for all } y,z \in U_x.
\end{align*}
For a locally Lipschitz function $f\colon X\to \mathbb{R}$ on a Banach space $X$, the generalized directional derivative of $f$ at the point $x\in X$ along the direction $y\in X$ is defined by
\begin{align*}
	f^{\circ}(x;y):=\limsup_{z\to x, t\to 0^+}
	\frac{f(z+ty)-f(z)}{t},
\end{align*}
see Clarke \cite[Chapter 2]{Clarke-1990}. Note that if $f\colon  X\to \mathbb{R}$ is strictly differentiable, that is, for all $x\in X$, $f'(x) \in X^*$ exists such that
\begin{align*}
	\lim_{\substack{z\to x\\ t \to 0^+}} \frac{f(z+ty)-f(z)}{t}=\langle f'(x),y\rangle \quad \text{for all } y \in X,
\end{align*}
then the usual directional derivative $f'(x;y)$ given by
\begin{align*}
	f'(x;y)=\lim_{t\to 0^+}\frac{f(x+ty)-f(x)}{t}
\end{align*}
exists and coincides with the generalized directional derivative $f^\circ(x;y)$.

If $f_1$, $f_2\colon  X \to \mathbb{R}$ are locally Lipschitz functions, then we have
\begin{align*}
	(f_1+f_2)^{\circ}(x;y)\leq f_1^{\circ}(x;y)+f_2^{\circ}(x;y)\quad \text{for all } x,y \in X.
\end{align*}
The generalized gradient of a locally Lipschitz function $f:X\to\mathbb{R}$ at $x\in X$ is the set
\begin{equation*}
	\partial f(x):=\left\{ x^*\in X^*\colon \langle x^*,y\rangle\leq f^{\circ}(x;y) \quad \text{for all } y \in X \right\}.
\end{equation*}
Based on the Hahn-Banach theorem we easily verify that $\partial f(x)$ is nonempty. An element $x\in X$ is said to be a critical point of  a locally Lipschitz function $f\colon X\to \mathbb{R}$ if there holds
\begin{equation*}
	f^{\circ}(x;y)\geq 0 \quad \text{for all $y \in X$}
\end{equation*}
or, equivalently, $0\in\partial f(x)$, see Chang \cite{Chang-1981}.

The nonsmooth mountain-pass theorem due to Chang is stated as follows \cite[Theorem 3.4]{Chang-1981}.

\begin{theorem}\label{mountain-pass-theorem}
	Let $X$ be a reflexive real Banach space and let $J \colon X \to \R$ be a locally Lipschitz functional satisfying the nonsmooth Palais-Smale condition. If there exist $x_0, x_1 \in X$ and a constant $r > 0$ such that $\|x_1-x_0\|>r$ and $\max\{J(x_0),J(x_1)\} <\inf_{x \in \partial B_r(x_0)} J(x)$, then $J$ has a critical point $u_0 \in X$ such that
	\begin{align*}
		\inf_{x \in \partial B_r(x_0) } J(x) \leq J(u_0)=\inf_{\pi \in \Pi} \max_{t \in [0,1]} J(\pi(t)),
	\end{align*}
	where $\Pi=\{ \pi \in C([0,1],X )\colon \pi(0)=x_0 , \pi(1)=x_1 \}$ and $\partial B_r(x_0)=\{u \in X\colon \|u-x_0\|=r\}$.
\end{theorem}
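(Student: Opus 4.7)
The plan is to adapt the classical mountain-pass argument to the locally Lipschitz setting. I would begin by showing that the minimax level $c := \inf_{\pi \in \Pi} \max_{t \in [0,1]} J(\pi(t))$ is well-defined and satisfies $c \geq \inf_{x \in \partial B_r(x_0)} J(x)$. Since $\|x_1 - x_0\|_X > r$, every continuous path $\pi$ from $x_0$ to $x_1$ must meet $\partial B_r(x_0)$ by connectedness of $[0,1]$, so $\max_{t \in [0,1]} J(\pi(t)) \geq \inf_{x \in \partial B_r(x_0)} J(x) > \max\{J(x_0), J(x_1)\}$. Taking the infimum over $\pi \in \Pi$ yields $c \geq \inf_{x \in \partial B_r(x_0)} J(x) > \max\{J(x_0), J(x_1)\}$, so in particular $c$ is finite.

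Next, I would argue by contradiction that $c$ is a critical value of $J$, i.e., that there exists $u_0 \in X$ with $J(u_0) = c$ and $0 \in \partial J(u_0)$. Assume the set of such critical points is empty. Then the nonsmooth Palais-Smale condition yields constants $\epsilon_0, \delta > 0$ such that
\[
m(u) := \inf_{x^* \in \partial J(u)} \|x^*\|_{X^*} \geq \delta \quad \text{for every } u \in J^{-1}([c-\epsilon_0, c+\epsilon_0]),
\]
for otherwise one could extract a Palais-Smale sequence at level $c$ whose convergent subsequence would produce a critical point at that level.

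The heart of the argument is a nonsmooth deformation lemma. One constructs a locally Lipschitz pseudo-gradient field $V \colon X \to X$ with $\|V(u)\|_X \leq 1$ and
\[
\langle x^*, V(u) \rangle \geq \tfrac{\delta}{2} \quad \text{for all } x^* \in \partial J(u) \text{ and all } u \in J^{-1}([c-\epsilon_0, c+\epsilon_0]).
\]
For each $u$ in the strip, the weak*-compactness and convexity of $\partial J(u)$ combined with a Hahn-Banach separation argument in the reflexive space $X$ yield a local unit direction $v_u$ satisfying a uniform descent inequality; these local directions are glued via a locally finite partition of unity and multiplied by a cut-off so that $V$ is supported near the strip. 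Solving the autonomous Cauchy problem $\dot\eta = -V(\eta)$ then produces a deformation $\eta \colon [0,1] \times X \to X$ along which $J$ is nonincreasing and, for a suitable $\epsilon \in (0, \epsilon_0)$, with $\eta(1, \cdot)$ mapping $J^{c+\epsilon}$ into $J^{c-\epsilon}$. Choosing $\pi_* \in \Pi$ with $\max_t J(\pi_*(t)) < c + \epsilon$ (possible by the definition of $c$) and noting that the cut-off keeps $x_0, x_1$ fixed since $J(x_0), J(x_1) < c - \epsilon$, the deformed curve $\eta(1, \pi_*(\cdot))$ belongs to $\Pi$ and has maximum below $c - \epsilon$, contradicting the definition of $c$. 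Hence $K_c \neq \emptyset$, and any $u_0 \in K_c$ satisfies $J(u_0) = c \geq \inf_{x \in \partial B_r(x_0)} J(x)$, as claimed.

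The hard part will be the construction of the locally Lipschitz pseudo-gradient field: because $\partial J(u)$ is set-valued, one cannot simply take $V(u) = -\nabla J(u)/\|\nabla J(u)\|_{X^*}$ as in the smooth mountain-pass theorem, and the descent inequality must hold uniformly over \emph{every} element of $\partial J(u)$, not merely at a specific minimizer. The technical pivot is the upper semicontinuity of $u \mapsto \partial J(u)$, which follows from $J$ being locally Lipschitz and permits the local descent directions obtained via separation to be patched into a globally locally Lipschitz vector field, after which the smooth ODE-flow argument proceeds as in the classical case.
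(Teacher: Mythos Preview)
Your proof sketch is essentially the standard argument for the nonsmooth mountain-pass theorem and is correct in outline. However, there is nothing to compare against: in the paper this theorem is not proved but merely \emph{quoted} from Chang \cite[Theorem 3.4]{Chang-1981} as a known tool from the literature. The paper states the result in Section~\ref{Section2} (Preliminaries) and then applies it in Section~\ref{Section4}; no proof is given or intended. Your outline follows the approach that Chang himself uses---minimax level, contradiction via a nonsmooth deformation lemma built from a locally Lipschitz pseudo-gradient field obtained through upper semicontinuity of $\partial J$ and a partition of unity---so in that sense your proposal is faithful to the original source, but the present paper simply cites the result.
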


\section{Constant-sign solutions}\label{Section3}

In this section we prove the existence of maximal and minimal constant sign solutions for problem \eqref{problem}. We suppose the following hypotheses:
\begin{enumerate}[label=\textnormal{(H$_0$)},ref=\textnormal{H$_0$}]
	\item\label{H0}
	For $i=1,2$, the functions $g_i\colon \partial\Omega \times\R\times\R\to\R$ are Carath\'{e}odory functions such that $g_i(x,0,0)=0$ for a.a.\,$x\in\partial\Omega$ and
	\begin{align*}
		&|g_i(x,s_1,s_2)  |\le H_i(x)\quad\text{for a.a.\,}x\in\partial\Omega,
	\end{align*}
	for all $(s_1,s_2)\in M$, whereby $M$ is a bounded set and $H_i\in L^\infty(\partial\Omega)$. Moreover, it holds
	\begin{equation}\label{condition-loc-Hoelder}
		\begin{aligned}
			&|g_i(x_1,s_1,t_1)-g_i(x_2,s_2,t_2)|\\
			&\leq L_i \left(|x_1-x_2|^{\alpha_i}+|s_1-s_2|^{\alpha_i}+|t_1-t_2|^{\alpha_i}\right)
		\end{aligned}
	\end{equation}
	for all $(x_1,s_1,t_2), (x_2,s_2,t_2) \in \partial\Omega \times [-K_i,K_i]\times [-K_i,K_i]$, where $K_i$ is a positive constant, $\alpha_i \in (0,1]$ and $\|H_i\|_{\infty,\partial\Omega} \leq L_i$.
\end{enumerate}
\begin{enumerate}[label=\textnormal{(H$_1$)},ref=\textnormal{H$_1$}]
	\item\label{H1}
	There exist constants $k_i>0$ and $d_i<0$ for $i=1,2$ such that
	\begin{align*}
		g_1(x,k_1,s_2) &\leq 0\quad \text{for a.a.\,}x\in\partial\Omega \text{ and for all }s_2\in [0,k_2],\\
		g_1(x,d_1,s_2) &\geq 0\quad \text{for a.a.\,}x\in\partial\Omega \text{ and for all }s_2\in [d_2,0],\\
		g_2(x,s_1,k_2) &\leq 0\quad \text{for a.a.\,}x\in\partial\Omega \text{ and for all }s_1\in [0,k_1],\\
		g_2(x,s_1,d_2) &\geq 0\quad \text{for a.a.\,}x\in\partial\Omega \text{ and for all }s_1\in [d_1,0].
	\end{align*}
\end{enumerate}

\begin{enumerate}[label=\textnormal{(H$_2$)},ref=\textnormal{H$_2$}]
	\item\label{H2}
	For $i=1,2$, there exist constants $c_i>\lambda_{1,p_i}$ such that
	\begin{align*}
		\liminf_{s_1\to 0^+} \frac{g_1(x,s_1,s_2)}{s_1^{p_1-1}} \geq c_1
	\end{align*}
	uniformly for a.a.\,$x\in\partial\Omega$ and for all $s_2\in (0,k_2]$,
	\begin{align*}
		\liminf_{s_1\to 0^-} \frac{g_1(x,s_1,s_2)}{|s_1|^{p_1-2}s_1} \geq c_1
	\end{align*}
	uniformly for a.a.\,$x\in\partial\Omega$ and for all $s_2\in [d_2,0)$,
	\begin{align*}
		\liminf_{s_2\to 0^+} \frac{g_2(x,s_1,s_2)}{s_2^{p_2-1}} \geq c_2
	\end{align*}
	uniformly for a.a.\,$x\in\partial\Omega$ and for all $s_1\in (0,k_1]$,
	\begin{align*}
		\liminf_{s_2\to 0^-} \frac{g_2(x,s_1,s_2)}{|s_2|^{p_2-2}s_2} \geq c_2
	\end{align*}
	uniformly for a.a.\,$x\in\partial\Omega$ and for all $s_1\in [d_1,0)$.
\end{enumerate}

\begin{remark}\label{regularity-argument}
	Note that \eqref{condition-loc-Hoelder} is needed for the usage of the regularity results of Lieberman \cite{Lieberman-1988}.
	Indeed, if $u=(u_1,u_2)$ is a solution of \eqref{problem} such that $(0,0)\le (u_1,u_2) \le (k_1,k_2)$ and both not identically zero, then $(u_1,u_2) \in \interior \times \interior$. Let us verify this just for $u_1$, the case for $u_2$ works in the same way. First, from the boundedness of $u_1$ and \eqref{condition-loc-Hoelder} along with Theorem 2 in Lieberman \cite{Lieberman-1988}, we know that $u\in C^{1,\alpha}(\overline{\Omega})$ for some $\alpha\in (0,1)$. From the first line in \eqref{problem}  we have $\Delta_{p_1} u_1 \leq  u_1^{p_1-1}$ for a.a.\,$x\in\Omega$. Taking $\beta(s)=s^{p_1-1}$ for all $s>0$, we get from V\'{a}zquez's strong maximum principle (see \cite{Vazquez-1984}) that $u_1(x)>0$ in $\Omega$ since $\int_{0^+} \frac{1}{(s\beta(s))^{\frac{1}{p_1}}}\,\mathrm{d}s=+\infty$.
	Suppose there exists $x_0 \in \partial\Omega$ such that $u_1(x_0)=0$. Applying again the maximum principle we obtain $\nabla u_1(x_0)\cdot \nu(x_0)<0$. In view of hypothesis \eqref{H2} first line, for $\varepsilon>0$ small enough such that $c_1-\varepsilon>0$, there exists $\delta>0$ such that for all $s_1\in (0,\delta)$ we get
	\begin{align*}
		g_1(x_0, s_1, s_2)\ge (c_1-\varepsilon) s_1^{p_1-1}\quad \text{uniformly for all } s_2\in (0,k_2],
	\end{align*}
	which yields by the continuity of $g_1$ as $s_1\to 0^+$
	\begin{align*}
		g_1(x_0, 0, s_2)\ge 0\quad \text{uniformly for all } s_2\in (0,k_2].
	\end{align*}
	The continuity of $g_1$ then shows that $g_1(x_0, 0, s_2)\ge 0$ for all  $s_2\in [0,k_2]$, in particular for $s_2= u_2(x_0)\in [0, k_2]$, that is, we have $g_1(x_0, 0, u_2(x_0))\ge 0$, and thus from the third line of \eqref{problem} it follows
	\begin{align*}
		\nabla u_1(x_0)\cdot \nu(x_0)\ge 0,
	\end{align*}
	which is in contradiction to $\nabla u_1(x_0)\cdot \nu(x_0)<0$.
	Hence, $u_1>0$ in $\overline{\Omega}$ and so $u_1\in \interior$. A similar result holds for a solution $(v_1,v_2)$ such that $(d_1,d_2)\le (v_1,v_2)\le (0,0)$, both not identically zero, then $(v_1,v_2) \in (-\interior) \times (-\interior)$.
\end{remark}

\begin{theorem}\label{theorem-constant-sign-solutions}
	Let hypotheses \eqref{H0}, \eqref{H1} and \eqref{H2} be satisfied. Then there exist a positive solution $(u_1,u_2)\in\mathcal{W}$ and a negative solution $(v_1,v_2)\in\mathcal{W}$ of the system \eqref{problem}.
\end{theorem}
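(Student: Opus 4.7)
The plan is to apply the sub-supersolution method of Definition~\ref{def-sub-supersolution} twice. For the positive solution I take the constant pair $(\overline u_1,\overline u_2)=(k_1,k_2)$ as supersolution and the small Steklov pair $(\underline u_1,\underline u_2)=(\varepsilon u_{1,p_1},\varepsilon u_{1,p_2})$ as subsolution, with $u_{1,p_i}\in\interior$ the $L^{p_i}(\partial\Omega)$-normalized positive first eigenfunction of \eqref{Steklov-eigenvalue-problem} and $\varepsilon>0$ to be fixed small. The negative case will be handled symmetrically with $\underline v=(d_1,d_2)$ and $\overline v=(-\varepsilon u_{1,p_1},-\varepsilon u_{1,p_2})$.

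The supersolution inequality \eqref{defsubsuper2} for $(k_1,k_2)$ reduces to $k_i^{p_i-1}\int_\Omega\varphi_i\,\diff x-\int_{\partial\Omega}g_i(\ldots)\varphi_i\,\diff\sigma\ge 0$, which is immediate from \eqref{H1} for every admissible $(w_1,w_2)\in[0,k_1]\times[0,k_2]$. For the subsolution, testing the Steklov equation \eqref{Steklov-eigenvalue-problem} for $u_{1,p_i}$ against $\varphi_i$ turns the domain contribution in \eqref{defsubsuper1} into $\varepsilon^{p_i-1}\lambda_{1,p_i}\int_{\partial\Omega}u_{1,p_i}^{p_i-1}\varphi_i\,\diff\sigma$, so the inequality boils down to the pointwise requirement
\[
\lambda_{1,p_i}\bigl(\varepsilon u_{1,p_i}\bigr)^{p_i-1}\leq g_i\bigl(x,\varepsilon u_{1,p_i},w_j\bigr)\quad\text{on }\partial\Omega
\]
for $w_j\in[\varepsilon u_{1,p_j},k_j]\subseteq(0,k_j]$. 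Hypothesis \eqref{H2} gives, after fixing $\eta>0$ with $c_i-\eta>\lambda_{1,p_i}$, a $\delta_i>0$ such that $g_i(x,s,t)\ge(c_i-\eta)s^{p_i-1}$ uniformly for $(s,t)\in(0,\delta_i]\times(0,k_j]$; choosing $\varepsilon\|u_{1,p_i}\|_\infty\le\min\{\delta_i,k_i\}$ for $i=1,2$ delivers both the desired inequality and $\underline u_i\le\overline u_i$. In the negative case, \eqref{H1} makes $(d_1,d_2)$ a subsolution immediately, while the negative-side bounds in \eqref{H2} produce $g_i(x,-\varepsilon u_{1,p_i},w_j)\le-(c_i-\eta)(\varepsilon u_{1,p_i})^{p_i-1}\le-\lambda_{1,p_i}(\varepsilon u_{1,p_i})^{p_i-1}$ for small $\varepsilon$, which is exactly \eqref{defsubsuper2} for $\overline v$.

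To actually produce a weak solution inside $[\underline u,\overline u]$, I would use the standard truncation $T_i s=\max\{\underline u_i,\min\{s,\overline u_i\}\}$, freeze the cross-arguments in the boundary terms, and consider the operator whose $i$-th component is $A_{p_i}u_i+|u_i|^{p_i-2}u_i$ minus the trace-lifted $g_i(\cdot,T_1u_1,T_2u_2)$. By Proposition~\ref{proposition-properties-operator}, the compactness of the trace embedding $\mathcal V_i\hookrightarrow L^{p_i}(\partial\Omega)$, and the $L^\infty$-bound from \eqref{H0}, this operator is bounded, continuous, pseudomonotone and coercive on $\mathcal W$; Browder's surjectivity theorem then yields a solution, and the classical test against $(u_i-\overline u_i)^+$ and $(\underline u_i-u_i)^+$, combined with the sub- and supersolution inequalities in which $w_j$ is taken to be the other component of the solution, shows $\underline u_i\le u_i\le\overline u_i$ a.e., so the truncation is inactive. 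Remark~\ref{regularity-argument} then lifts $(u_1,u_2)$ into $\interior\times\interior$, giving a genuine positive solution; the negative case is handled identically. The main obstacle will be precisely this last step: Definition~\ref{def-sub-supersolution} freezes the cross-arguments $(w_1,w_2)$ in the boundary terms but not in the PDE system, so one has to verify that the coupled truncated system--and not merely its decoupled one-variable analogue--admits a weak solution and that the one-sided tests simultaneously remove both truncations. This machinery is in the spirit of Carl-Le-Motreanu \cite{Carl-Le-Motreanu-2007}, but it must be transported to the Steklov-type boundary setting present in \eqref{problem}.
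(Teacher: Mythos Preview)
Your proposal is correct and follows essentially the same route as the paper: the paper also takes $(\overline u_1,\overline u_2)=(k_1,k_2)$ and $(\underline u_1,\underline u_2)=(\varepsilon u_{1,p_1},\varepsilon u_{1,p_2})$, verifies \eqref{defsubsuper1}--\eqref{defsubsuper2} exactly as you do via \eqref{H1} and \eqref{H2}, and then invokes the sub-supersolution existence result of Guarnotta--Livrea--Winkert \cite{Guarnotta-Livrea-Winkert-2023} (which is precisely the truncation/pseudomonotone/coercivity machinery you sketch) together with Remark~\ref{regularity-argument}. The only cosmetic difference is that the paper outsources the ``main obstacle'' you flag to that reference rather than spelling out the coupled truncated problem and the one-sided tests.
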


\begin{proof}
	From \eqref{H1} we directly obtain
	\begin{equation}\label{proof-1}
		\begin{aligned}
			-g_1(x,k_1,s_2) &\geq 0\quad \text{for a.a.\,}x\in\partial\Omega \text{ and for all }s_2\in [0,k_2],\\
			-g_2(x,s_1,k_2) &\geq 0\quad \text{for a.a.\,}x\in\partial\Omega \text{ and for all }s_1\in [0,k_1].
		\end{aligned}
	\end{equation}

	Hypothesis \eqref{H2} implies that there exists $\delta \in (0,\min\{k_1,k_2\})$ such that
	\begin{align}\phantom{\eqref{estimate-3}}\label{estimate-3}
		g_1(x,s_1,s_2)> \lambda_{1,p_1} s_1^{p_1-1}
	\end{align}
	for a.a.\,$x\in\partial \Omega$, for all $s_1\in(0,\delta)$ and for all $s_2\in (0,k_2]$,
	\begin{align}\label{estimate-4}
		g_2(x,s_1,s_2)> \lambda_{1,p_2} s_2^{p_2-1}
	\end{align}
	for a.a.\,$x\in\partial\Omega$, for all $s_1\in(0,k_1]$ and for all $s_2\in(0,\delta)$.

	From the Steklov eigenvalue problem for the $p_i$-Laplacian multiplied with $\varepsilon^{p_i-1}>0$ we know that
	\begin{equation}\label{Robin2}
		\begin{aligned}
			&\int_\Omega |\nabla (\varepsilon u_{1,p_i})|^{p_i-2}\nabla (\varepsilon u_{1,p_i})\cdot \nabla \varphi_i\,\diff x+\int_{\Omega} (\varepsilon u_{1,p_i})^{p_i-1}\varphi_i\,\diff x\\
			&=\lambda_{1,p_i} \int_{\partial \Omega} (\varepsilon u_{1,p_i})^{p_i-1}\varphi_i\,\diff \sigma
		\end{aligned}
	\end{equation}
	holds for all $\varphi_i \in \mathcal{V}_i$ with $\varphi_i\geq 0$ and $i=1,2$. We choose $\varepsilon>0$ small enough such that
	\begin{align}\label{eigenfunction-1}
		\varepsilon u_{1,p_i}(x)< \delta\quad\text{for all }x\in\Omega \text{ and } i=1,2.
	\end{align}
	Using \eqref{Robin2} and \eqref{eigenfunction-1} along with \eqref{estimate-3} and \eqref{estimate-4} in \eqref{defsubsuper1} for
	\begin{align*}
		(\underline{u}_1,\underline{u}_2):=(\varepsilon u_{1,p_1},\varepsilon u_{1,p_2})
		\quad\text{and}\quad
		(\overline{u}_1,\overline{u}_2):=(k_1,k_2)
	\end{align*}
	gives
	\begin{align*}
		&\int_{\partial\Omega} \left(\lambda_{1,p_1}(\varepsilon u_{1,p_1})^{p_1-1}-g_1(x,\varepsilon u_{1,p_1},w_2)\right) \varphi_1\,\diff \sigma\\
		&+\int_{\partial\Omega} \left(\lambda_{1,p_2}(\varepsilon u_{1,p_2})^{p_2-1}-g_2(x,w_1,\varepsilon u_{1,p_2})\right)\varphi_2\,\diff \sigma\leq 0.
	\end{align*}
	On the other hand, we get from \eqref{proof-1} and \eqref{defsubsuper2} that
	\begin{align*}
			&\int_\Omega \left(|\nabla k_1|^{p_1-2}\nabla k_1\cdot \nabla \varphi_1+k_1^{p_1-1}\varphi_1
			\right) \diff x\
			+\int_{\partial\Omega} \left(-g_1(x,k_1,w_2)\right)\varphi_1\,\diff \sigma\\
			&+\int_\Omega \left(|\nabla k_2|^{p_2-2}\nabla k_2\cdot \nabla \varphi_2+ k_2^{p_2-1}\varphi_2
			\right)\diff x
			+\int_{\partial\Omega} \left(-g_2(x,w_1,k_2)\right) \varphi_2\,\diff \sigma \geq 0
	\end{align*}
	for all $(\varphi_1,\varphi_2)\in\mathcal{W}$ with $\varphi_1,\varphi_2\geq 0$ a.e.\,in $\Omega$ and for all $(w_1,w_2)\in \mathcal{W}$ such that $\underline{u}_i\leq w_i\leq \overline{u}_i$  for $i=1,2$. Therefore, $(\underline{u}_1,\underline{u}_2)\in\mathcal{W}$ and $(\overline{u}_1,\overline{u}_2)\in\mathcal{W}$ form a pair of sub- and supersolution related to Definition \ref{def-sub-supersolution}. From Guarnotta-Livrea-Winkert \cite{Guarnotta-Livrea-Winkert-2023} (for $\mu\equiv 0$) we know that a solution $(u_1,u_2)\in\mathcal{W}$ of the system \eqref{problem} exists such that $u_i \leq k_i$. Moreover, the nonlinear regularity theory implies that $(u_1,u_2) \in \operatorname{int}(C^1(\overline{\Omega})_+)\times \operatorname{int}(C^1(\overline{\Omega})_+)$, see Remark \ref{regularity-argument}.

	Similarly, one can show that $(d_1,d_2)$ and $(-\varepsilon u_{1,p_1},-\varepsilon u_{1,p_2})$ form a pair of sub- and supersolution in the sense of Definition \ref{def-sub-supersolution} for the system \eqref{problem} provided the parameter $\varepsilon>0$ is sufficiently small. Therefore, we obtain a negative solution $(v_1,v_2) \in (-\operatorname{int}(C^1(\overline{\Omega})_+))\times (-\operatorname{int}(C^1(\overline{\Omega})_+))$ satisfying $v_i \geq d_i$ for $i=1,2$.
\end{proof}

Next, we are going to prove the existence of a minimal positive and of a maximal negative solution of the system \eqref{problem} in the trapping region constructed in the proof of Theorem \ref{theorem-constant-sign-solutions}.

\begin{theorem}\label{theorem-minimal-maximal-constant-sign-solutions-away-from-zero}
	Let hypotheses \eqref{H0}, \eqref{H1} and \eqref{H2} be satisfied. Then, for a given solution $(u_1,u_2) \in\mathcal{W}$ of problem \eqref{problem} in $[\varepsilon u_{1,p_1},k_1]\times [\varepsilon u_{1,p_2},k_2]$ for some $\varepsilon>0$ there exists a minimal solution $(u_1^\varepsilon,u_2^\varepsilon)$ of \eqref{problem} in $[\varepsilon u_{1,p_1},k_1]\times [\varepsilon u_{1,p_2},k_2]$ such that $u_i^\varepsilon \leq u_i$ for $i=1,2$. Furthermore, given a solution $(v_1,v_2)\in \mathcal{W}$ of problem \eqref{problem} in $[d_1,-\varepsilon u_{1,p_1}]\times [d_2,-\varepsilon u_{1,p_2}]$ for some $\varepsilon>0$, there exists a maximal solution $(v_1^\varepsilon,v_2^\varepsilon)$ of \eqref{problem} in $[d_1,-\varepsilon u_{1,p_1}]\times [d_2,-\varepsilon u_{1,p_2}]$ such that $v_i^\varepsilon \geq v_i$ for $i=1,2$.
\end{theorem}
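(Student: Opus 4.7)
The plan is to prove the result via Zorn's Lemma applied to the set
\[
\mathcal{S} := \{(z_1,z_2) \in \mathcal{W} : (z_1,z_2) \text{ is a solution of \eqref{problem} with } \varepsilon u_{1,p_i} \leq z_i \leq u_i,\ i=1,2\}
\]
endowed with the componentwise partial ordering inherited from $\mathcal{W}$. Since $(u_1,u_2) \in \mathcal{S}$, the set is nonempty, and any minimal element $(u_1^\varepsilon, u_2^\varepsilon) \in \mathcal{S}$ will automatically satisfy the claim: if $(w_1,w_2)$ is any solution in $[\varepsilon u_{1,p_1},k_1] \times [\varepsilon u_{1,p_2},k_2]$ with $w_i \leq u_i^\varepsilon$, then $w_i \leq u_i^\varepsilon \leq u_i$ forces $(w_1,w_2) \in \mathcal{S}$, and minimality in $\mathcal{S}$ gives $(w_1,w_2) = (u_1^\varepsilon, u_2^\varepsilon)$.

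To apply Zorn's Lemma it suffices to show that every totally ordered chain $\mathcal{C} \subseteq \mathcal{S}$ admits a lower bound in $\mathcal{S}$. By a standard selection argument exploiting the separability of $L^{p_1}(\Omega) \times L^{p_2}(\Omega)$, I would extract a countable decreasing subsequence $\{(z_1^n, z_2^n)\}_n \subseteq \mathcal{C}$ whose pointwise a.e.\ limit $(z_1^*,z_2^*)$ is a lower bound of $\mathcal{C}$ in the componentwise ordering. Each $z_i^n$ is trapped between $\varepsilon u_{1,p_i}$ and $k_i$, hence uniformly bounded in $L^\infty(\Omega)$. Testing the weak formulation of \eqref{problem} for $(z_1^n,z_2^n)$ with $\varphi_i = z_i^n$, and using that $|g_i(x,z_1^n,z_2^n)| \leq H_i(x)$ with $H_i \in L^\infty(\partial\Omega)$ by \eqref{H0} together with the continuity of the trace operator, yields a uniform bound on $\{(z_1^n,z_2^n)\}$ in $\mathcal{W}$. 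Hence, up to a subsequence, $(z_1^n,z_2^n) \rightharpoonup (z_1^*,z_2^*)$ in $\mathcal{W}$, and strongly in $\mathcal{L}$ as well as in $L^{p_1}(\partial\Omega) \times L^{p_2}(\partial\Omega)$ by compactness of the trace embedding.

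To pass to the limit in the equation I would invoke the (S$_+$) property of $A_{p_i}$ from Proposition \ref{proposition-properties-operator}. Choosing $\varphi_i = z_i^n - z_i^*$ in the weak formulation gives
\[
\langle A_{p_i}(z_i^n), z_i^n - z_i^*\rangle_{\mathcal{V}_i} = -\int_\Omega |z_i^n|^{p_i-2}z_i^n(z_i^n-z_i^*)\diff x + \int_{\partial\Omega} g_i(x,z_1^n,z_2^n)(z_i^n-z_i^*)\diff \sigma,
\]
and both terms on the right converge to zero, the first by the uniform $L^\infty$-bound on $z_i^n$ combined with strong convergence in $L^{p_i}(\Omega)$, the second by the $L^\infty(\partial\Omega)$-bound on $g_i$ together with strong convergence in $L^{p_i}(\partial\Omega)$. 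The (S$_+$) property then upgrades the convergence to $z_i^n \to z_i^*$ in $\mathcal{V}_i$, so that letting $n \to \infty$ in \eqref{defsol1}--\eqref{defsol2}, with dominated convergence for the boundary integrals justified by \eqref{H0}, shows that $(z_1^*,z_2^*)$ is a weak solution. The inequalities $\varepsilon u_{1,p_i} \leq z_i^* \leq u_i$ persist in the pointwise limit, so $(z_1^*, z_2^*) \in \mathcal{S}$ is the required lower bound, and Zorn's Lemma furnishes a minimal element $(u_1^\varepsilon, u_2^\varepsilon)$.

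The statement about the maximal negative solution is obtained by the symmetric construction applied to the set of solutions in $[d_1,-\varepsilon u_{1,p_1}] \times [d_2,-\varepsilon u_{1,p_2}]$ that dominate $(v_1,v_2)$, now with reversed ordering so that upper bounds of chains are produced by the analogous (S$_+$)-based limiting procedure. The principal technical obstacle I anticipate is the clean extraction of a countable decreasing subsequence that realizes the chain's infimum: because $\mathcal{C}$ may be uncountable, the pointwise a.e.\ infimum is not a priori measurable, and one must exploit the separability of $L^{p_i}(\Omega)$ to replace $\mathcal{C}$ by a cofinal countable subfamily. Everything else is a routine application of monotonicity, the uniform $L^\infty$-trapping, and Proposition \ref{proposition-properties-operator}.
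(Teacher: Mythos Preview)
Your proposal is correct and follows essentially the same approach as the paper: define the set of solutions in the trapping region lying below $(u_1,u_2)$, apply Zorn's Lemma by showing every chain has a lower bound, extract a decreasing sequence realizing the infimum of the chain, obtain uniform $\mathcal{W}$-bounds by testing with $z_i^n$ and invoking \eqref{H0} plus the trace theorem, then use the \textnormal{(S$_+$)}-property of $A_{p_i}$ after testing with $z_i^n - z_i^*$ to upgrade to strong convergence and pass to the limit. The only difference is cosmetic: the paper simply asserts the existence of a monotone sequence with $\inf\mathcal{C}=\inf_k(u_1^k,u_2^k)$, whereas you explicitly flag the separability argument needed to justify this step.
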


\begin{proof}
We are going to prove just the first assertion of the theorem, the second one can be shown using similar arguments.

We choose $\varepsilon>0$ sufficiently small (like in the proof of Theorem \ref{theorem-constant-sign-solutions}).
Then, Theorem \ref{theorem-constant-sign-solutions} guarantees that a solution $(u_1,u_2)\in \mathcal{W}$ of \eqref{problem} exists in $[\varepsilon u_{1,p_1},k_1]\times [\varepsilon u_{1,p_2},k_2]$. Denote by $\mathcal{S}_\varepsilon$ the set of all solutions $(h_1,h_2)$ of \eqref{problem} such that  $(h_1,h_2) \in [\varepsilon u_{1,p_1},k_1]\times [\varepsilon u_{1,p_2},k_2]$ satisfying $h_i\leq u_i$ for $i=1,2$. Apparently, $S_\varepsilon$ is not empty. We are going to prove that $\mathcal{S}_\varepsilon$ has a minimal element by applying Zorn's Lemma. For this purpose, let $\mathcal{C}$ be a chain in $\mathcal{S}_\varepsilon$. Then we can find a sequence $\{u_1^k,u_2^k\}_{k\geq 1} \subset \mathcal{C}$ such that $u_i^{k+1} \leq u_i^k$ for $i=1,2$ and for all $k\geq 1$ satisfying
	\begin{align*}
		\inf \mathcal{C}=\inf_{k\geq 1} (u_1^k,u_2^k).
	\end{align*}
	Since $(u_1^k,u_2^k) \in \mathcal{C}$ we know that $(u_1^k,u_2^k)$ solves system \eqref{problem}. Testing \eqref{defsol1} with $\varphi_1=u_1^k$ and \eqref{defsol2} with $\varphi_2=u_2^k$ and using \eqref{H0} together with the trace theorem, we get that
	\begin{align*}
		\|u_i^k\|_{1,p_i}^{p_i-1} \leq C_i
	\end{align*}
	for $C_i>0$ independent of $u_i^k$ and for all $u_i^k \in \mathcal{V}_i$. Hence, the sequence $\{u_1^k,u_2^k\}_{k\geq 1}$ is bounded in $\mathcal{W}$. Therefore, up to a subsequence if necessary, not relabeled, we may assume that
		\begin{equation}\label{convergence-properties}
		\begin{aligned}
			u_i^k &\rightharpoonup \hat{u}_i\quad &&\text{in }\mathcal{V}_i,\quad i=1,2,\\
			u_i^k(x)&\to \hat{u}_i(x)\quad &&\text{for a.a.\,}x\in\Omega\\
			u_i^k(x)&\to \hat{u}_i(x)\quad &&\text{for a.a.\,}x\in\partial\Omega.
		\end{aligned}
	\end{equation}
	From \eqref{convergence-properties} we conclude that $(\hat{u}_1,\hat{u}_2)\in [\varepsilon u_{1,p_1},k_1]\times [\varepsilon u_{1,p_2},k_2]$ and $\hat{u}_i\leq u_i$ for $i=1,2$. Furthermore, testing the corresponding weak formulations with $u_i^k-\hat{u}_i$ and using \eqref{convergence-properties} along with $\eqref{H0}$ we get that
	\begin{align*}
		\limsup_{k\to\infty}\,\langle A_{p_i} (u_i^k), u_i^k-\hat{u}_i\rangle\leq 0\quad\text{for }i=1,2.
	\end{align*}
	Combining this with \eqref{convergence-properties} and the fact that $A_{p_i}$ fulfills the \textnormal{(S$_+$)}-property on $\mathcal{V}_i$, see Proposition \ref{proposition-properties-operator}, we conclude that
	\begin{align}\label{convergence-properties-2}
		u_i^k \to \hat{u}_i \quad \text{in }\mathcal{V}_i,\quad i=1,2.
	\end{align}
	Applying \eqref{convergence-properties-2} to the corresponding weak formulations shows that $(\hat{u}_1,\hat{u}_2)$ is a solution of \eqref{problem} that belongs to $\mathcal{S}_\varepsilon$ and $\inf \mathcal{C}=(\hat{u}_1,\hat{u}_2)\in \mathcal{S}_\varepsilon$. From Zorn's Lemma, see Papageorgiou-Winkert \cite[p.\,36]{Papageorgiou-Winkert-2018}, we conclude that $\mathcal{S}_\varepsilon$ has a minimal element $(u_1^\varepsilon,u_2^\varepsilon)$.
\end{proof}

In order to get maximal and minimal solutions of \eqref{problem}, we have to suppose further conditions on the vector field $(g_1,g_2)$ near zero as follows.
\begin{enumerate}[label=\textnormal{(H$_3$)},ref=\textnormal{H$_3$}]
	\item\label{H3}
	There exist constants $\alpha_i\geq c_i$, $i=1,2$, such that
	\begin{align*}
		\limsup_{s_1\to 0^+} \frac{g_1(x,s_1,s_2)}{s_1^{p_1-1}} \leq \alpha_1
	\end{align*}
	uniformly for a.a.\,$x\in\partial\Omega$ and for all $s_2\in (0,k_2]$,
	\begin{align*}
		\limsup_{s_1\to 0^-} \frac{g_1(x,s_1,s_2)}{|s_1|^{p_1-2}s_1} \leq \alpha_1
	\end{align*}
	uniformly for a.a.\,$x\in\partial\Omega$ and for all $s_2\in [d_2,0)$,
	\begin{align*}
		\limsup_{s_2\to 0^+} \frac{g_2(x,s_1,s_2)}{s_2^{p_2-1}} \leq \alpha_2
	\end{align*}
	uniformly for a.a.\,$x\in\partial\Omega$ and for all $s_1\in (0,k_1]$,
	\begin{align*}
		\limsup_{s_2\to 0^-} \frac{g_2(x,s_1,s_2)}{|s_2|^{p_2-2}s_2} \leq \alpha_2
	\end{align*}
	uniformly for a.a.\,$x\in\partial\Omega$ and for all $s_1\in [d_1,0)$.
\end{enumerate}

Now we can state and prove our main result on maximal and minimal solutions of \eqref{problem}.

\begin{theorem}\label{theorem-minimal-maximal-constant-sign-solutions}
	Let hypotheses \eqref{H0}--\eqref{H3} be satisfied. Then, problem \eqref{problem} admits a positive solution $(u_{1,+},u_{2,+})\in \operatorname{int}(C^1(\overline{\Omega})_+)\times \operatorname{int}(C^1(\overline{\Omega})_+)$ such that $u_{i,+}\leq k_i$ for $i=1,2$, which is minimal among the positive solutions of \eqref{problem}. Moreover, problem \eqref{problem} admits a negative solution $(u_{1,-},u_{2,-})\in (-\operatorname{int}(C^1(\overline{\Omega})_+))\times (-\operatorname{int}(C^1(\overline{\Omega})_+))$ such that $u_{i,-}\geq d_i$ for $i=1,2$, which is maximal among the negative solutions of \eqref{problem}.
\end{theorem}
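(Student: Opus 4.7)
The plan is to construct $(u_{1,+},u_{2,+})$ as the strong limit in $\mathcal{W}$ of a decreasing sequence produced by iterating Theorem \ref{theorem-minimal-maximal-constant-sign-solutions-away-from-zero}, to read off minimality from the construction, and to rule out degeneration of the limit by a Picone-type identity. The maximal negative solution then follows from the same argument applied to the reflected lower halves of hypotheses \eqref{H1}--\eqref{H3}.

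Starting from the positive solution $(u_1,u_2)$ of Theorem \ref{theorem-constant-sign-solutions} as $(u_1^{(0)},u_2^{(0)})$, I would define $(u_1^{(n)},u_2^{(n)})$ inductively: given $(u_1^{(n-1)},u_2^{(n-1)})\in\interior\times\interior$, pick $\varepsilon_n\in(0,\varepsilon_{n-1}/2)$ small enough that $\varepsilon_n u_{1,p_i}\leq u_i^{(n-1)}$, and apply Theorem \ref{theorem-minimal-maximal-constant-sign-solutions-away-from-zero} to obtain a minimal solution $(u_1^{(n)},u_2^{(n)})\leq(u_1^{(n-1)},u_2^{(n-1)})$ in the trapping region $[\varepsilon_n u_{1,p_1},k_1]\times[\varepsilon_n u_{1,p_2},k_2]$. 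The energy estimate based on \eqref{H0} bounds $(u_1^{(n)},u_2^{(n)})$ in $\mathcal{W}$; combined with the monotonicity of the sequence and the $(S_+)$-property of $A_{p_i}$ from Proposition \ref{proposition-properties-operator}, this yields strong convergence to a nonnegative solution $(u_{1,+},u_{2,+})$ of \eqref{problem} with $u_{i,+}\leq k_i$. Minimality then follows automatically: if $(v_1,v_2)$ is any positive solution with $v_i\leq u_{i,+}$, Remark \ref{regularity-argument} gives $(v_1,v_2)\in\interior\times\interior$ and hence $v_i\geq\varepsilon u_{1,p_i}$ for some $\varepsilon>0$; for $n$ large, $v_i\leq u_{i,+}\leq u_i^{(n-1)}$ and $\varepsilon_n<\varepsilon$, placing $(v_1,v_2)$ in the admissible set in which $(u_1^{(n)},u_2^{(n)})$ is minimal, so $u_i^{(n)}\leq v_i$ and in the limit $v_i=u_{i,+}$.

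The main obstacle is to exclude $u_{i,+}\equiv 0$. Suppose, by contradiction, $u_{1,+}\equiv 0$. The Lieberman $C^{1,\alpha}$-regularity invoked in Remark \ref{regularity-argument} gives uniform bounds, so $u_1^{(n)}\to 0$ in $C^1(\overline{\Omega})$ along a subsequence. Setting $w_1^n:=u_1^{(n)}/\|u_1^{(n)}\|_\infty$, the homogeneity of $\Delta_{p_1}$ yields
\begin{equation*}
	-\Delta_{p_1}w_1^n=-(w_1^n)^{p_1-1}\ \text{in }\Omega,\qquad|\nabla w_1^n|^{p_1-2}\nabla w_1^n\cdot\nu=\beta_n(x)(w_1^n)^{p_1-1}\ \text{on }\partial\Omega,
\end{equation*}
with $\beta_n(x):=g_1(x,u_1^{(n)},u_2^{(n)})/(u_1^{(n)})^{p_1-1}$. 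Hypotheses \eqref{H2}--\eqref{H3} ensure $c_1-o(1)\leq\beta_n\leq\alpha_1+o(1)$, so $\beta_n$ is $L^\infty$-bounded, and up to subsequences $w_1^n\to w_1$ in $C^1(\overline{\Omega})$ with $\|w_1\|_\infty=1$ and $\beta_n\to\beta$ weakly-$*$ in $L^\infty(\partial\Omega)$ with $\beta\geq c_1$; the limit $w_1$ is strictly positive on $\overline{\Omega}$ by the strong maximum principle argument of Remark \ref{regularity-argument}. Testing the limiting equation for $w_1$ with $\varphi=u_{1,p_1}^{p_1}/w_1^{p_1-1}$ (admissible since $w_1>0$) and invoking the Allegretto--Huang Picone identity for the $p_1$-Laplacian produces
\begin{equation*}
	c_1=c_1\|u_{1,p_1}\|_{p_1,\partial\Omega}^{p_1}\leq\int_{\partial\Omega}\beta(x)u_{1,p_1}^{p_1}\,\diff\sigma\leq\|\nabla u_{1,p_1}\|_{p_1}^{p_1}+\|u_{1,p_1}\|_{p_1}^{p_1}=\lambda_{1,p_1},
\end{equation*}
contradicting $c_1>\lambda_{1,p_1}$. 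The two-sided control near zero furnished by \eqref{H2} and \eqref{H3} is indispensable precisely here: \eqref{H2} delivers both the subsolution $\varepsilon u_{1,p_i}$ and the lower bound on $\beta$ yielding the contradiction, while \eqref{H3} provides the upper bound on $\beta_n$ that allows the limiting problem for $w_1$ to be extracted at all.
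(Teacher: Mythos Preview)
Your overall architecture coincides with the paper's: a decreasing sequence of minimal solutions in shrinking trapping regions $[\varepsilon_n u_{1,p_i},k_i]$, strong convergence via the $(\mathrm{S}_+)$-property, nontriviality of the limit by contradiction, and minimality by comparison. The paper's construction uses $\varepsilon_n=1/n$ and argues identically for the limit and for minimality; your iterative choice of $\varepsilon_n$ is an inessential variant.

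The one substantive difference is the nontriviality step. The paper normalises $u_1^n$ in the $\mathcal V_1$-norm, passes to a strong Sobolev limit $w$ via $(\mathrm{S}_+)$, identifies $w$ as a nonnegative eigenfunction of a weighted Steklov problem with weight $\mu\in[c_1,\alpha_1]$, and then invokes the strict monotonicity of the first weighted eigenvalue (Fern\'andez Bonder--Rossi) to get $1=\lambda_1(\mu)<\lambda_1(\lambda_{1,p_1})=1$. Your Picone route is morally the same contradiction unfolded by hand, and has the merit of avoiding the external reference.

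However, your execution has a genuine gap. You claim $w_1^n\to w_1$ in $C^1(\overline\Omega)$ via Lieberman applied to the \emph{rescaled} boundary problem, but Lieberman's estimate requires a H\"older bound on the boundary datum. Writing the flux as $g_1(x,u_1^{(n)},u_2^{(n)})/\|u_1^{(n)}\|_\infty^{p_1-1}$, its H\"older constant under \eqref{H0} blows up like $\|u_1^{(n)}\|_\infty^{-(p_1-1)}$, so uniform $C^{1,\alpha}$ bounds on $w_1^n$ are not available and the strict positivity of $w_1$ needed for your test function is unjustified. The repair is simple: apply Picone \emph{before} passing to the limit. Since $u_1^{(n)}\in\interior$, the test function $\varphi=u_{1,p_1}^{p_1}/(u_1^{(n)})^{p_1-1}$ is admissible in the equation for $u_1^{(n)}$ itself, yielding
\[
\int_{\partial\Omega}\beta_n\,u_{1,p_1}^{p_1}\,\diff\sigma\le\|\nabla u_{1,p_1}\|_{p_1}^{p_1}+\|u_{1,p_1}\|_{p_1}^{p_1}=\lambda_{1,p_1}.
\]
The uniform $C^1$ convergence $u_1^{(n)}\to 0$ (which \emph{is} legitimate, since Lieberman applies to the unrescaled problem with bounded data) gives $\beta_n\ge c_1-o(1)$ on $\partial\Omega$ via \eqref{H2}; letting $n\to\infty$ produces $c_1\le\lambda_{1,p_1}$, the desired contradiction. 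No limiting equation for $w_1$ is needed, and \eqref{H3} is then used only to keep $\beta_n$ bounded above so that the weak formulation makes sense.
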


\begin{proof}
	As before, we only show the existence of a minimal positive solution of \eqref{problem}, the proof for the maximal negative solution works in a similar way. The application of Theorems \ref{theorem-constant-sign-solutions} and \ref{theorem-minimal-maximal-constant-sign-solutions-away-from-zero} gives us a sequence $\{(u_1^n,u_2^n)\}_{n \geq n_0}\subseteq \mathcal{W}$ for $n_0$ sufficiently large such that for every integer $n\geq n_0$ we have that $(u_1^n,u_2^n)$ is a solution of \eqref{problem} that is minimal in the trapping region $[\frac{1}{n} u_{1,p_1},k_1]\times [\frac{1}{n} u_{1,p_2},k_2]$ such that $u_i^{n+1} \leq u_i^n$ for $i=1,2$. From this and \eqref{H0} we may suppose, for a subsequence if necessary, not relabeled, that, for $i=1,2$,
	\begin{equation*}
		\begin{aligned}
			u_i^n &\rightharpoonup u_{i,+}\quad &&\text{in }\mathcal{V}_i,\\
			u_i^n&\to u_{i,+}\quad &&\text{in } L^{p_i}(\Omega) \text{ and  pointwisely a.e.\,in }\Omega,\\
			u_i^n&\to u_{i,+}\quad &&\text{in } L^{p_i}(\partial\Omega) \text{ and pointwisely a.e.\,in }\partial\Omega.
		\end{aligned}
	\end{equation*}
	for some $(u_{1,+},u_{2,+})\in \mathcal{W}$. As in the proof of Theorem \ref{theorem-minimal-maximal-constant-sign-solutions-away-from-zero} by applying the \textnormal{(S$_+$)}-property of $A_{p_i}$  on $\mathcal{V}_i$, see Proposition \ref{proposition-properties-operator}, we conclude that $(u_{1,+},u_{2,+})$ is a solution of \eqref{problem}.\\
	{\bf Claim :} $u_{i,+}\neq 0$ for $i=1,2$.\\
	Suppose this is not the case and assume that $u_{1,+}=0$. For each $n \geq n_0$ we set
	\begin{align*}
		w_n&=\frac{u_1^n}{\|u_1^n\|_{1,p_1}}
		\quad\text{and}\quad
		\xi_n=\frac{g_1(x,u_1^n,u_2^n)}{(u_1^n)^{p_1-1}}w_n^{p_1-1}.
	\end{align*}
	Clearly the sequence $\{w_n\}_{n \geq n_0} \subseteq \mathcal{V}_1$ is bounded and due to hypotheses \eqref{H2} and \eqref{H3} we may assume that
	\begin{equation}\label{convergence-properties3}
		\begin{aligned}
			w_n &\rightharpoonup w \quad &&\text{in }\mathcal{V}_1,\\
			w_n(x)&\to w(x)\quad &&\text{in } L^{p_1}(\Omega) \text{ and pointwisely a.e.\,in }\Omega,\\
			w_n(x)&\to w(x)\quad &&\text{in } L^{p_1}(\partial \Omega) \text{ and pointwisely a.e.\,in } \partial\Omega,\\
			\xi_n &\rightharpoonup \xi & &\text{in }L^{\frac{p_1}{p_1-1}}(\partial\Omega),
		\end{aligned}
	\end{equation}
	for some $w\in \mathcal{V}_1$ and $\xi \in L^{\frac{p_1}{p_1-1}}(\partial\Omega)$. Since $(u_1^n,u_2^n)\in \mathcal{W}$ is a solution of \eqref{problem}, we have from \eqref{defsol1} with $\varphi_1=w_n-w\in \mathcal{V}_1$ and the representation $u_1^n=\|u_1^n\|_{1,p_1} w_n$ that
	\begin{equation}\label{defsol3}
		\begin{aligned}
			&\int_\Omega |\nabla w_n|^{p_1-2}\nabla w_n \cdot \nabla (w_n-w)\,\diff x+\int_{\Omega} |w_n|^{p_1-2}w_n(w_n-w)\,\diff x\\
			&=\int_{\partial\Omega} \xi_n(w_n-w)\,\diff \sigma.
		\end{aligned}
	\end{equation}
	From \eqref{defsol3} and \eqref{convergence-properties3} we obtain that
	\begin{align*}
		\lim_{n\to \infty}\int_\Omega |\nabla w_n|^{p_1-2}\nabla w_n \cdot \nabla (w_n-w)\,\diff x=0.
	\end{align*}
	Thus, again by the \textnormal{(S$_+$)}-property of $A_{p_1}$  on $\mathcal{V}_1$ it follows that $w_n \to w$ in $\mathcal{V}_1$ which implies that $w\neq 0$ since $\|w_n\|_{1,p_1}=1$. Moreover, from the strong convergence in $\mathcal{V}_1$ and the fact that
	$(u_1^n,u_2^n)\in \mathcal{W}$ is a solution of \eqref{problem} as well as the representation $u_1^n=\|u_1^n\|_{1,p_1} w_n$ it follows from \eqref{defsol1} that
	\begin{align*}
			&\int_\Omega |\nabla w|^{p_1-2}\nabla w \cdot \nabla \varphi\,\diff x
			+\int_\Omega |w|^{p_1-2}w\varphi\,\diff x
			=\int_{\partial\Omega} \xi\varphi\,\diff \sigma
	\end{align*}
	for all $\varphi\in\mathcal{V}_1$.

	Taking \eqref{H2} and \eqref{H3} into account, for any given $\varepsilon>0$ there exists an integer $n(x)$ for a.a.\,$x\in \partial\Omega$ such that for every $n \geq n(x)$ it holds
	\begin{align*}
		(c_1-\varepsilon)w_n(x)^{p_1-1} &\leq \xi_n(x) \leq (\alpha_1+\varepsilon)w_n(x)^{p_1-1} \quad\text{for a.a.\,} x\in\partial\Omega.
	\end{align*}
	Since $\varepsilon>0$ is arbitrary, letting $n\to \infty$, we get via Mazur's theorem
	\begin{align*}
			c_1w(x)^{p_1-1} &\leq \xi(x)=\mu(x)w(x)^{p_1-1} \leq \alpha_1 w(x)^{p_1-1}\quad\text{for a.a.\,} x\in\partial\Omega
	\end{align*}
	with $c_1\leq \mu(x)\leq \alpha_1$ for a.a.\,$x\in\partial\Omega$ and
	\begin{align*}
		\mu(x)=\frac{g_1(x,u_{1,+}(x),u_{2,+}(x))}{u_{1,+}(x)^{p_1-1}}>0\quad\text{for a.a.\,} x\in\partial\Omega.
	\end{align*}
	Hence $w$ is an eigenfunction associated to the eigenvalue $1$ of the weighted eigenvalue problem with weight $\mu(x)>0$
	\begin{equation}\label{Steklov3}
		\begin{aligned}
			-\Delta_{p_1}w
			& =-w^{p_1-1} \quad &  & \text{in }\Omega, \\
			|\nabla w|^{p_1-2}\nabla w\cdot \nu
			& =\mu(x)w^{p_1-1}     &  & \text{on }\partial\Omega.
		\end{aligned}
	\end{equation}
	We consider now the $V(x)$-weighted eigenvalue problem
	\begin{equation}\label{Steklov4}
		\begin{aligned}
			-\Delta_{p_1} w_V
			& =- w_V^{p_1-1} \quad &  & \text{in }\Omega, \\
			|\nabla  w_V|^{p_1-2}\nabla w_V\cdot \nu
			& =\lambda(V) V(x) w_V^{p_1-1}     &  & \text{on }\partial\Omega,
		\end{aligned}
	\end{equation}
	with $V(x)>0$, $\lambda(V)$ the eigenvalue for the weight $V(x)$ and $w_V$ the corresponding eigenfunctions. In the following, we call $\lambda_1(V)$ the first eigenvalue of \eqref{Steklov4}.  Since $w$ is nonnegative, due to Fern\'{a}ndez Bonder-Rossi \cite[Theorem 1.2 and Proposition 3.1]{Fernandez-Bonder-Rossi-2002}, we know that $\lambda_1(\mu)=1$ because of \eqref{Steklov3}. We consider now problem \eqref{Steklov4} with weights $c_1$ and $\lambda_{1,p_1}$ and related first eigenvalues $\lambda_1(c_1)$ and $\lambda_1(\lambda_{1,p_1})$, respectively. Since $\lambda_{1,p_1}<c_1\leq \mu(x)$ for a.a.\,$x\in\partial\Omega$, we have with \cite[Theorem 1.3]{Fernandez-Bonder-Rossi-2002} that
	\begin{align}\label{Steklov6}
		1=\lambda_1(\mu) \leq \lambda_1(c_1) <\lambda_1(\lambda_{1,p_1}).
	\end{align}
	Since $\lambda_{1,p_1}$ is the smallest eigenvalue of \eqref{Steklov-eigenvalue-problem} with eigenfunction $u_{1,p_1}>0$ we see that $\lambda_1(\lambda_{1,p_1})=1$.
	This is a contradiction to \eqref{Steklov6}. Hence, $u_{i,+}\neq 0$ for $i=1,2$. Since $u_{i,+} \in [0,k_i]$ for $i=1,2$, by the nonlinear regularity theory, see Remark \ref{regularity-argument}, we conclude that $(u_{1,+},u_{2,+})\in \interior\times\interior$.

	It remains to show that $(u_{1,+},u_{2,+})$ is a minimal positive solution of problem \eqref{problem}. To this end, let $(v_1,v_2)\in \mathcal{W}$ be any positive solution of \eqref{problem} such that $v_1 \leq u_{1,+}$ and $v_2\leq u_{2,+}$. Again, by the nonlinear regularity theory and the strong maximum principle, we know that $(v_1,v_2)\in \interior\times\interior$. This fact along with the construction of $(u_{1,+},u_{2,+})$ ensures that
	\begin{align}\label{Steklov8}
		\frac{1}{n}u_{1,p_i} \leq v_i \leq u_{i,+} \leq u_i^n \leq k_i \quad\text{for }i=1,2
	\end{align}
	whenever $n$ is sufficiently large. However, since $(u_1^n,u_2^n)$ is a minimal solution in $[\frac{1}{n}u_{1,p_1},k_1]\times [\frac{1}{n}u_{1,p_2},k_2]$, we get from \eqref{Steklov8} that $u_i^n \leq v_i$ for $i=1,2$. But then, again because of \eqref{Steklov8}, it follows that $u_{1,+}=v_1$ and $u_{2,+}=v_2$. This completes the proof of the theorem.
\end{proof}

\section{Another nontrivial solution}\label{Section4}

In this section we are interested in a third nontrivial solution of the system \eqref{problem} under the assumption that \eqref{problem} has a variational structure. To be more precise we consider the system
\begin{equation}\label{problem2}
	\begin{aligned}
		-\Delta_{p_1}u_1
		& =-|u_1|^{p_1-2}u_1 \quad && \text{in } \Omega,\\
		-\Delta_{p_2}u_2
		& =-|u_2|^{p_2-2}u_2 \quad && \text{in } \Omega,\\
		|\nabla u_1|^{p_1-2}\nabla u_1 \cdot \nu
		&=g_{s_1}(x,u_1,u_2) && \text{on } \partial\Omega,\\
		|\nabla u_2|^{p_2-2}\nabla u_2 \cdot \nu
		&=g_{s_2}(x,u_1,u_2)  && \text{on } \partial\Omega,
	\end{aligned}
\end{equation}
where
\begin{align*}
	(g_1(x,s_1,s_2),g_2(x,s_1,s_2))=(g_{s_1}(x,s_1,s_2),g_{s_2}(x,s_1,s_2))=:\nabla g(x,s_1,s_2),
\end{align*}
with $g\colon \partial\Omega \times \R\times \R \to\R$ being a Carath\'eodory function which is twice differentiable with respect to the second and third variable $(s_1,s_2)\in\R^2$. Moreover, we suppose that the partial derivatives $g_{s_1}, g_{s_2}, g_{s_1s_1}, g_{s_1s_2}, g_{s_2s_2}$ are Carath\'eodory functions on $\partial \Omega \times \R^2$ and $g_{s_1}, g_{s_2}$ are supposed to be bounded on bounded sets. Without any loss of generality, we assume that $g(x,0,0)=0$ for a.a.\,$\partial\Omega$.

To avoid having to write down all the conditions again, let us now assume that \eqref{H0}--\eqref{H3} hold true replacing $(g_1,g_2)$ by $(g_{s_1},g_{s_2})$. Taking Theorem \ref{theorem-minimal-maximal-constant-sign-solutions} into account, we can find a minimal positive solution $(u_{1,+},u_{2,+})$ of problem \eqref{problem2} with $u_{i,+} \leq k_i$ for $i=1,2$. Based on this, we introduce the truncation function $\tau_+\colon \partial\Omega \times \R^2\to\R^2$ assigning to each $(x,s_1,s_2)\in\partial\Omega \times \R^2$ the projection $\tau_+(x,s_1,s_2)$ of $(s_1,s_2)$  on  the closed convex subset $[0,u_{1,+}(x)]\times [0,u_{2,+}(x)]$ of $\R^2$. In the same way, by applying the maximal negative solution $(u_{1,-},u_{2,-})$ of \eqref{problem2} with $u_{i,-}\geq d_i$ for $i=1,2$ obtained in Theorem \ref{theorem-minimal-maximal-constant-sign-solutions}, we define the truncation function $\tau_-\colon\partial\Omega\times\R^2\to\R^2$ as the projection $\tau_-(x,s_1,s_2)$ of $(s_1,s_2)$ on the closed convex subset $[u_{1,-}(x),0]\times [u_{2,-}(x),0]$ of $\R^2$. Lastly, we introduce the truncation function $\tau_0\colon\partial\Omega \times\R^2\to\R^2$ as the projection $\tau_0(x,s_1,s_2)$ of $(s_1,s_2)$ on the closed convex subset $[u_{1,-}(x),u_{1,+}(x)]\times [u_{2,-}(x),u_{2,+}(x)]$ of $\R^2$.

With the help of the truncation functions $\tau_+,\tau_-,\tau_0\colon\partial\Omega \times\R^2\to\R^2$ we can introduce truncated functions related to $g\colon \partial\Omega \times \R\times \R \to\R$ in the following way:
\begin{align*}
	g_+(x,s_1,s_2)
	&=g(x,\tau_+(x,s_1,s_2))\\
	&\quad+(s_1-u_{1,+}(x))^+g_{s_1}(x,\tau_+(x,s_1,s_2))\\
	&\quad+(s_2-u_{2,+}(x))^+g_{s_2}(x,\tau_+(x,s_1,s_2))\\
	&\quad -s_1^-g_{s_1}(x,\tau_+(x,s_1,s_2))\\
	&\quad -s_2^-g_{s_2}(x,\tau_+(x,s_1,s_2)),\\
	g_-(x,s_1,s_2)
	&=g(x,\tau_-(x,s_1,s_2))\\
	&\quad-(s_1-u_{1,-}(x))^-g_{s_1}(x,\tau_-(x,s_1,s_2))\\
	&\quad-(s_2-u_{2,-}(x))^-g_{s_2}(x,\tau_-(x,s_1,s_2))\\
	&\quad +s_1^+g_{s_1}(x,\tau_-(x,s_1,s_2))\\
	&\quad +s_2^+g_{s_2}(x,\tau_-(x,s_1,s_2)),\\
	g_0(x,s_1,s_2)
	&=g(x,\tau_0(x,s_1,s_2))\\
	&\quad-(s_1-u_{1,-}(x))^-g_{s_1}(x,\tau_0(x,s_1,s_2))\\
	&\quad-(s_2-u_{2,-}(x))^-g_{s_2}(x,\tau_0(x,s_1,s_2))\\
	&\quad +(s_1-u_{1,+}(x))^+g_{s_1}(x,\tau_0(x,s_1,s_2))\\
	&\quad +(s_2-u_{2,+}(x))^+g_{s_2}(x,\tau_0(x,s_1,s_2)).
\end{align*}
These truncated mappings $g_-,g_+,g_0\colon \partial\Omega\times\R^2\to\R$ are Carath\'eodory functions being locally Lipschitz continuous with respect to the variables $(s_1,s_2)\in\R^2$. Therefore, their generalized gradients in the sense of Clarke exist. Applying Clarke's calculus according to \cite[Theorem 2.5.1]{Clarke-1990}, we have the following representations:
\begin{equation}\label{representation_gradient_g}
	\begin{aligned}
		&\partial_{(s_1,s_2)} g_+(x,s_1,s_2)=\{\nabla g(x,s_1,s_2)\}\\
		&\text{for a.a.\,}x\in\partial\Omega \text{ and for all }(s_1,s_2)\in [0,u_{1,+}(x)]\times [0,u_{2,+}(x)],
	\end{aligned}
\end{equation}
\begin{equation}\label{representation_gradient_g-2}
	\begin{aligned}
		&\partial_{(s_1,s_2)} g_-(x,s_1,s_2)=\{\nabla g(x,s_1,s_2)\}\\
		&\text{for a.a.\,}x\in\partial\Omega \text{ and for all }(s_1,s_2)\in [u_{1,-}(x),0]\times [u_{2,-}(x),0],
	\end{aligned}
\end{equation}
\begin{equation}\label{representation_gradient_g-3}
	\begin{aligned}
		&\partial_{(s_1,s_2)} g_0(x,s_1,s_2)=\{\nabla g(x,s_1,s_2)\}\\
		&\text{for a.a.\,}x\in\partial\Omega \text{ and for all }(s_1,s_2)\in [u_{1,-}(x),u_{1,+}(x)]\times [u_{2,-}(x),u_{2,+}(x)].
	\end{aligned}
\end{equation}

Taking the modified truncated functions $g_-,g_+,g_0\colon \partial\Omega\times\R^2\to\R$ into account, we introduce the related truncated, nonsmooth functionals $E_+, E_-, E_0\colon \mathcal{W}\to \R$ defined by
\begin{align*}
	E_+(u_1,u_2)&=\frac{1}{p_1} \| u_1\|_{1,p_1}^{p_1} +\frac{1}{p_2} \|u_2\|_{1,p_2}^{p_2}-\int_{\partial \Omega} g_+(x,u_1,u_2) \,\diff\sigma,\\
	E_-(u_1,u_2)&=\frac{1}{p_1} \| u_1\|_{1,p_1}^{p_1} +\frac{1}{p_2} \|u_2\|_{1,p_2}^{p_2}-\int_{\partial \Omega} g_-(x,u_1,u_2) \,\diff\sigma,\\
	E_0(u_1,u_2)&=\frac{1}{p_1} \| u_1\|_{1,p_1}^{p_1} +\frac{1}{p_2} \|u_2\|_{1,p_2}^{p_2}-\int_{\partial \Omega} g_0(x,u_1,u_2) \,\diff\sigma.
\end{align*}
These functionals are locally Lipschitz and so their generalized gradients exist. Before we consider the location of the critical points of these functionals, we need to suppose an additional condition:

\begin{enumerate}[label=\textnormal{(H$_4$)},ref=\textnormal{H$_4$}]
	\item\label{H4}
		\begin{enumerate}
			\item[\textnormal{(i)}]
				The function $s_2\mapsto g_{s_1}(x,s_1,s_2)$ is nondecreasing on the interval $[d_2,k_2]$ for a.a.\,$x\in\partial\Omega$ and for all $s_1\in [d_1,k_1]$.
			\item[\textnormal{(ii)}]
				The function $s_1\mapsto g_{s_2}(x,s_1,s_2)$ is nondecreasing on the interval $[d_1,k_1]$ for a.a.\,$x\in\partial\Omega$ and for all $s_2\in [d_2,k_2]$.
		\end{enumerate}
\end{enumerate}

Next, we are interested in the location of critical points of the functionals $E_+, E_-, E_0\colon \mathcal{W}\to \R$.

\begin{proposition}\label{prop_location_critical_points}
	Let hypotheses \eqref{H0}--\eqref{H3} be satisfied, where $(g_1,g_2)$ is replaced by $\nabla g$ and suppose \eqref{H4}. Then, the following assertions hold:
	\begin{enumerate}
		\item[\textnormal{(i)}]
			If $(v_1,v_2) \in \mathcal{W}$ is a critical point of $E_+$, then
			\begin{align*}
				0\leq v_1(x) \leq u_{1,+}(x)
				\quad\text{and}\quad
				0\leq v_2(x) \leq u_{2,+}(x)
			\end{align*}
			for a.a.\,$x\in\partial\Omega$.
		\item[\textnormal{(ii)}]
			If $(v_1,v_2) \in \mathcal{W}$ is a critical point of $E_-$, then
			\begin{align*}
				u_{1,-}(x) \leq v_1(x)\leq 0
				\quad\text{and}\quad
				u_{2,-}(x)\leq v_2(x) \leq 0
			\end{align*}
			for a.a.\,$x\in\partial\Omega$.
		\item[\textnormal{(iii)}]
			If $(v_1,v_2) \in \mathcal{W}$ is a critical point of $E_0$, then
			\begin{align*}
				u_{1,-}(x) \leq v_1(x)\leq u_{1,+}(x)
				\quad\text{and}\quad
				u_{2,-}(x)\leq v_2(x) \leq u_{2,+}(x)
			\end{align*}
			for a.a.\,$x\in\partial\Omega$.
	\end{enumerate}
\end{proposition}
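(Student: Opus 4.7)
The plan is to prove (i) in full; parts (ii) and (iii) follow by the same template with the roles of $u_{\pm}$ reversed or combined. Since $E_+$ is the sum of the $C^1$ functional $\Phi(u_1,u_2)=\tfrac{1}{p_1}\|u_1\|_{1,p_1}^{p_1}+\tfrac{1}{p_2}\|u_2\|_{1,p_2}^{p_2}$ and of $\Psi(u_1,u_2)=-\int_{\partial\Omega}g_+(x,u_1,u_2)\,\diff\sigma$, which is locally Lipschitz on $\mathcal{W}$, the sum rule for Clarke's gradient together with the Aubin--Clarke formula shows: if $(v_1,v_2)$ is a critical point of $E_+$, then there exist measurable $\eta_i$ with $(\eta_1(x),\eta_2(x))\in\partial_{(s_1,s_2)}g_+(x,v_1(x),v_2(x))$ for a.a.\ $x\in\partial\Omega$ such that
\begin{equation*}
\langle A_{p_i}(v_i),\varphi_i\rangle_{\mathcal{V}_i}+\int_\Omega |v_i|^{p_i-2}v_i\varphi_i\,\diff x=\int_{\partial\Omega}\eta_i\varphi_i\,\diff\sigma\qquad(i=1,2)
\end{equation*}
for all $\varphi_i\in\mathcal{V}_i$. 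The rest of the argument is a pair of test-function computations.

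For the lower bound $v_i\ge 0$, I would test the above with $\varphi_i=-v_i^-$. The left-hand side collapses to $\|v_i^-\|_{1,p_i}^{p_i}$ by the usual splitting. On $\{v_i<0\}$, the truncation $\tau_+(x,v_1,v_2)$ has $i$-th coordinate equal to $0$, and outside a null set in the parameter plane $g_+$ is classically differentiable in $s_i$ there with partial equal to $g_{s_i}(x,\tau_+(x,v_1,v_2))$. Combining \eqref{H2} with the continuity argument of Remark~\ref{regularity-argument} (which extends the $\liminf$ inequalities to the boundary value $s_i=0$) gives that this partial is $\ge 0$, whence $\eta_i\ge 0$ on $\{v_i<0\}$. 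The right-hand side is then $\le 0$, forcing $v_i^-=0$.

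For the upper bound $v_i\le u_{i,+}$, I would exploit that $(u_{1,+},u_{2,+})$ solves \eqref{problem2} by Theorem~\ref{theorem-minimal-maximal-constant-sign-solutions}. Subtracting the weak identity for $u_{i,+}$ from the one for $v_i$, testing with $(v_i-u_{i,+})^+$, and summing over $i=1,2$: the left-hand side is a sum of monotonicity pairings for $A_{p_i}$ and for $t\mapsto|t|^{p_i-2}t$, both non-negative, with the zero-order integrand $[v_i^{p_i-1}-u_{i,+}^{p_i-1}](v_i-u_{i,+})^+$ strictly positive on $\{v_i>u_{i,+}\}$ (using $v_i\ge 0$ from the previous step). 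On the right-hand side, the explicit formula for $g_+$ yields on $\{v_i>u_{i,+}\}$ (with $\{i,j\}=\{1,2\}$ and $v_j\ge 0$) that $\tau_+(x,v_1,v_2)_i=u_{i,+}(x)$ and $\tau_+(x,v_1,v_2)_j=\min\{v_j,u_{j,+}(x)\}$, so $\eta_i=g_{s_i}(x,u_{i,+},\min\{v_j,u_{j,+}\})$. Hypothesis \eqref{H4} then gives $\eta_i\le g_{s_i}(x,u_{1,+},u_{2,+})$ on $\{v_i>u_{i,+}\}$, hence the right-hand side is $\le 0$; balancing with the non-negative left-hand side, both integrands must vanish and $\{v_i>u_{i,+}\}$ has Lebesgue measure zero.

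Parts (ii) and (iii) follow the same recipe: for (ii) one tests with $\varphi_i=v_i^+$ (upper bound $v_i\le 0$) and $\varphi_i=-(v_i-u_{i,-})^-$ (lower bound), using the negative-side versions of \eqref{H2}--\eqref{H4}; for (iii) both pairs of tests are carried out simultaneously to squeeze $v_i$ inside $[u_{i,-},u_{i,+}]$. The one genuinely delicate point, and the main obstacle, is monitoring the Clarke subdifferential $\partial g_{\pm},\partial g_0$ along the ``corner'' lines $\{s_i=0\},\{s_i=u_{i,\pm}(x)\}$ where $g_+,g_-,g_0$ fail to be smooth; but these lines form a null set in the parameter plane, and by the continuity of the partial derivatives of $g$ together with the sign information supplied by \eqref{H2}--\eqref{H3} the required one-sided bounds pass to the limit on them, so the proof reduces to the case analysis sketched above.
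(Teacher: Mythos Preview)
Your proof is correct and follows essentially the same approach as the paper's: you express the critical-point condition via the Aubin--Clarke formula to obtain selections $(\eta_1,\eta_2)\in\partial_{(s_1,s_2)}g_+(x,v_1,v_2)$, then test with $-v_i^-$ and $(v_i-u_{i,+})^+$, using \eqref{H2} for the lower bound and \eqref{H4} together with the equation satisfied by $(u_{1,+},u_{2,+})$ for the upper bound. The only organizational differences are that the paper proves the upper bound first (handling all three ranges of $v_j$ directly rather than invoking the already-established $v_j\ge 0$), and for the lower bound the paper combines \eqref{H2} with \eqref{H3} to get the sharper identity $g_{s_1}(x,0,s_2)=0$ rather than your inequality $\ge 0$---but your weaker inequality is already enough to force $\|v_i^-\|_{1,p_i}^{p_i}\le 0$.
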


\begin{proof}
	We only prove the assertion in \textnormal{(i)}, the cases \textnormal{(ii)} and \textnormal{(iii)} can be shown using similar arguments. To this end, let $(v_1,v_2)$ be a critical point of $E_+$, that is, $(0,0)\in \partial E_+(v_1,v_2)$ which means that
	\begin{equation}\label{problem3}
		\begin{aligned}
			-\Delta_{p_1}v_1
			& =-|v_1|^{p_1-2}v_1 \quad && \text{in } \Omega,\\
			-\Delta_{p_2}v_2
			& =-|v_2|^{p_2-2}v_2 \quad && \text{in } \Omega,\\
			|\nabla v_1|^{p_1-2}\nabla v_1 \cdot \nu
			&=h_1(x)&& \text{on } \partial\Omega,\\
			|\nabla v_2|^{p_2-2}\nabla v_2 \cdot \nu
			&=h_2(x) && \text{on } \partial\Omega,
		\end{aligned}
	\end{equation}
	where
	\begin{align}\label{expressions-h}
		(h_1(x),h_2(x)) \in \partial_{(s_1,s_2)}g_+(x,v_1(x),v_2(x)),
	\end{align}
	which follows from Clarke \cite[Theorem 2.7.5]{Clarke-1990}. Note that the precise expressions of the functions $h_1$ and $h_2$ can be found in Carl \cite{Carl-2012-a,Carl-2012-b}. Using the fact that $(u_{1,+}, u_{2,+})$ solves problem \eqref{problem2}, we obtain, due to \eqref{problem3} and \eqref{expressions-h}, by choosing the test function $(v_1-u_{1,+})^+\in \mathcal{V}_1$, that
	\begin{align*}
		\int_\Omega&\left(|\nabla v_1|^{p_1-2}\nabla v_1-|\nabla u_{1,+}|^{p_1-1}\nabla u_{1,+}\right)\cdot\nabla(v_1-u_{1,+})^+dx \\
		&+\int_\Omega  \left(|v_1|^{p_1-2}v_1-u_{1,+}^{p_1-1}\right)(v_1-u_{1,+})^+ \,\mathrm{d}x\\
		&=\int_{\{v_1>u_{1,+}\}} \left(h_1(x)-g_{s_1}(x,u_{1,+},u_{2,+})\right) (v_1-u_{1,+}) \,\mathrm{d}\sigma\\
		&=\int_{\{v_1>u_{1,+}, v_2<0\}} \left(g_{s_1}(x,u_{1,+},0)-g_{s_1}(x,u_{1,+},u_{2,+})\right) (v_1-u_{1,+}) \,\mathrm{d}\sigma\\
		&\quad +\int_{\{v_1>u_{1,+}, 0\leq v_2\leq u_{2,+}\}} \left(g_{s_1}(x,u_{1,+},v_2)-g_{s_1}(x,u_{1,+},u_{2,+})\right) (v_1-u_{1,+}) \,\mathrm{d}\sigma,
	\end{align*}
	since
	\begin{align*}
		\partial_{(s_1,s_2)} g_+(x,s_1,s_2)=\{\nabla g(x,u_{1,+}(x),u_{2,+}(x))\}\quad\text{for a.a.\,}x\in\partial\Omega
	\end{align*}
	provided $s_1>u_{1,+}(x)$ and $s_2>u_{2,+}(x)$. Applying \eqref{H4} (i) gives us
	\begin{align*}
		\int_\Omega&\left(|\nabla v_1|^{p_1-2}\nabla v_1-|\nabla u_{1,+}|^{p_1-1}\nabla u_{1,+}\right)\cdot\nabla(v_1-u_{1,+})^+\,\mathrm{d} x\\
		&+\int_\Omega  \left(|v_1|^{p_1-2}v_1-u_{1,+}^{p_1-1}\right)(v_1-u_{1,+})^+ \,\mathrm{d}x \leq 0.
	\end{align*}
	Therefore, $v_1 \leq u_{1,+}$. In the same way, using \eqref{H4} (ii), we show that $v_2 \leq u_{2,+}$.

	From hypotheses \eqref{H2} and \eqref{H3} we get that $g_{s_1}(x,0,s_2)=0$ for a.a.\,$x\in\partial\Omega$ and for all $s_2 \in [0,k_2]$. Using $-v_1^- \in\mathcal{V}_1$ as test function, taking  \eqref{problem3} and \eqref{expressions-h} again into account, it follows that
	\begin{align*}
		&\int_\Omega|\nabla v_1|^{p_1-2}\nabla v_1\cdot\nabla(-v_1^-)\diff x +\int_\Omega |v_1|^{p_1-2}v_1 \left(-v_1^- \right)\,\mathrm{d} x\\
		&=-\int_{\{v_1 <0\}} h_1v_1\,\mathrm{d}\sigma\\
		&=-\int_{\{v_1<0, v_2<0\}} g_{s_1}(x,0,0)v_1\,\mathrm{d}\sigma
		-\int_{\{v_1<0, 0\leq v_2 \leq u_{2,+}\}} g_{s_1} (x,0,v_2)v_1 \,\mathrm{d}\sigma\\
		&\quad -\int_{\{v_1<0, v_2>u_{2,+}\}} g_{s_1} (x,0,u_{2,+}) v_1 \,\mathrm{d}\sigma=0.
	\end{align*}
	Thus, $v_1 \geq 0$. In the same way, we prove that $v_2\geq 0$.
\end{proof}

In the next proposition we are going to compare the minimal and maximal constant-sign solutions obtained in Theorem \ref{theorem-minimal-maximal-constant-sign-solutions} with the minimizers of the constructed nonsmooth functionals.

\begin{proposition}\label{prop_local_minimizers}
	Let hypotheses \eqref{H0}--\eqref{H3} be satisfied, where $(g_1,g_2)$ is replaced by $\nabla g$ and suppose \eqref{H4}. Then the minimal positive solution $(u_{1,+},u_{2,+})$ of problem \eqref{problem2} is the unique global minimizer of $E_+$ and a local minimizer of $E_0$ while the maximal negative solution $(u_{1,-},u_{2,-})$ of problem \eqref{problem2} is the unique global minimizer of $E_-$ and a local minimizer of $E_0$.
\end{proposition}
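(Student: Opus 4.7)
I will treat only the positive case, since the maximal negative one is analogous under sign reversal. The plan is to produce $(u_{1,+},u_{2,+})$ as the unique global minimizer of $E_+$ via the direct method, and then upgrade this to local minimality for $E_0$ by exploiting the fact that $g_+$ and $g_0$ coincide on pairs with nonnegative components. For existence, since $g_{s_1},g_{s_2}$ are bounded on bounded sets (assumed in the setup of Section \ref{Section4}), the truncation makes $s\mapsto g_+(x,s)$ Lipschitz with constant uniform in $x\in\partial\Omega$, so
\[
\Bigl|\int_{\partial\Omega}g_+(x,u_1,u_2)\,d\sigma\Bigr|\le C\bigl(1+\|u_1\|_{1,p_1}+\|u_2\|_{1,p_2}\bigr)
\]
by the trace embedding; since $p_i>1$, the leading $\frac{1}{p_i}\|u_i\|_{1,p_i}^{p_i}$ terms force coercivity, and weak lower semicontinuity follows from convexity of the norm part together with compactness of the trace. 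The direct method then yields a minimizer $(\tilde u_1,\tilde u_2)\in\mathcal{W}$.

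Next I identify the minimizer with $(u_{1,+},u_{2,+})$. Fermat's rule gives $(0,0)\in\partial E_+(\tilde u_1,\tilde u_2)$, so Proposition \ref{prop_location_critical_points}(i) confines $0\le\tilde u_i\le u_{i,+}$, and on this box the Clarke gradient of $g_+$ reduces to $\{\nabla g\}$ by \eqref{representation_gradient_g}, hence $(\tilde u_1,\tilde u_2)$ actually solves \eqref{problem2}. The minimum value is strictly negative: integrating the lower bounds in \eqref{H2} gives $g(x,s_1,s_2)\ge\frac{c_1}{p_1}s_1^{p_1}+\frac{c_2}{p_2}s_2^{p_2}$ near the origin, and for $\varepsilon>0$ small the Steklov identity combined with $\|u_{1,p_i}\|_{p_i,\partial\Omega}=1$ gives
\[
E_+(\varepsilon u_{1,p_1},\varepsilon u_{1,p_2})\le\frac{\varepsilon^{p_1}(\lambda_{1,p_1}-c_1)}{p_1}+\frac{\varepsilon^{p_2}(\lambda_{1,p_2}-c_2)}{p_2}<0
\]
because $c_i>\lambda_{1,p_i}$. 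To exclude a minimizer with one identically-zero component, say $\tilde u_2\equiv 0$ and $\tilde u_1\in\interior$ by Remark \ref{regularity-argument}, the same Steklov-type test in the second slot, $E_+(\tilde u_1,\varepsilon u_{1,p_2})-E_+(\tilde u_1,0)$, is again strictly negative thanks to the uniform-in-$s_1$ lower bound on $g_{s_2}$ in \eqref{H2}, contradicting global minimality. Thus both components are nontrivial, so $(\tilde u_1,\tilde u_2)\in\interior\times\interior$ is a positive solution dominated by $(u_{1,+},u_{2,+})$, and minimality of the latter forces $\tilde u_i=u_{i,+}$; this gives both existence and uniqueness of the global minimizer of $E_+$.

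For the $E_0$ part, I observe from the explicit formulas that $\tau_+(x,s)=\tau_0(x,s)$ whenever $s_1,s_2\ge 0$ (the $u_{i,-}<0$ lower bound in $\tau_0$ is inactive) and that the negative-part correction terms in the formulas for $g_+$ and $g_0$ all vanish there, so $g_+=g_0$ on the first quadrant. Since $u_{i,+}\in\interior$, fix $\delta>0$ with $u_{i,+}\ge\delta$ on $\overline{\Omega}$; every $(w_1,w_2)\in\mathcal{W}$ with $\|w_i-u_{i,+}\|_{C(\overline{\Omega})}<\delta$ then satisfies $w_i>0$ on $\overline{\Omega}$, and hence
\[
E_0(w_1,w_2)=E_+(w_1,w_2)\ge E_+(u_{1,+},u_{2,+})=E_0(u_{1,+},u_{2,+}),
\]
which makes $(u_{1,+},u_{2,+})$ a $C^0$-local (hence $C^1$-local) minimizer of $E_0$, and a nonsmooth adaptation of the Brezis-Nirenberg principle upgrades this to a $W^{1,p_1}\times W^{1,p_2}$-local minimizer. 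The main obstacle is twofold: first, excluding a minimizer of $E_+$ with one vanishing component, which genuinely uses the uniformity in $s_1$ (respectively $s_2$) of the bounds in \eqref{H2}; and second, the final $C^0$-to-$W^{1,p}$ upgrade in this vector-valued nonsmooth framework, where the classical scalar argument has to be reworked using Clarke's calculus and the decoupled product structure of the norm part of $E_0$.
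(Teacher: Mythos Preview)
Your proof is correct and follows essentially the same route as the paper's: direct method for a global minimizer of $E_+$, Proposition~\ref{prop_location_critical_points}(i) to confine it to the box, the Steklov test $(\varepsilon u_{1,p_1},\varepsilon u_{1,p_2})$ to force a negative minimum, the one-slot Steklov test to rule out a vanishing component, minimality of $(u_{1,+},u_{2,+})$ to identify the minimizer, and then the observation $E_0=E_+$ on nonnegative pairs combined with a $C^1$-versus-$W^{1,p}$ result (the paper cites Bai--Gasi\'nski--Winkert--Zeng for exactly the nonsmooth vector-valued upgrade you flag). Your explicit verification that $g_+=g_0$ on the first quadrant is slightly more detailed than the paper's one-line remark, and your invocation of Remark~\ref{regularity-argument} to get $\tilde u_1\in\interior$ before running the second Steklov test is a point the paper leaves implicit.
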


\begin{proof}
	Due to the truncated function $g_+\colon \partial\Omega\times\R^2\to\R$, it is clear that the functional $E_+\colon\mathcal{W}\to\R$ is coercive and sequentially weakly lower semicontinuous. This guarantees the existence of a global minimizer $(w_1,w_2) \in\mathcal{W}$ of $E_+$ which is a critical point of $E_+$ in the sense of nonsmooth analysis, see Section \ref{Section2}. From Proposition \ref{prop_location_critical_points} (i) and $u_{i,+} \leq k_i$ for $i=1,2$, it follows that
	\begin{align*}
		0 \leq w_i(x) \leq u_{i,+}(x) \leq k_i\quad\text{for a.a.\,}x\in\partial\Omega \text{ and for }i=1,2.
	\end{align*}
	Recall that $u_{1,p_i}$ is the first eigenfunction of the Steklov eigenvalue problem given in \eqref{Steklov-eigenvalue-problem} with $\|u_{1,p_i}\|_{p_i,\partial\Omega}=1$. This implies that
	\begin{align}\label{steklov-representation}
		\|\nabla u_{1,p_i}\|_{p_i}^{p_i}+\|u_{1,p_i}\|_{p_i}^{p_i}=\lambda_{1,p_i},
	\end{align}
	where $\lambda_{1,p_i}>0$ is the associated first eigenvalue. Then, from hypothesis \eqref{H2} and \eqref{H4} along with the mean value theorem applied to $g_+$, we know that for every $\varepsilon>0$ there exists $t>0$ such that
	\begin{align*}
		E_+(tu_{1,p_1},tu_{1,p_2}) \leq \left(\lambda_{1,p_1}-c_1+\varepsilon \right)\frac{t^{p_1}}{p_1} +\left(\lambda_{1,p_2}-c_2+\varepsilon \right)\frac{t^{p_2}}{p_2},
	\end{align*}
	where we have used \eqref{steklov-representation}. Therefore, taking $\varepsilon>0$ such that $\varepsilon< \min\{c_1-\lambda_{1,p_1},c_1-\lambda_{1,p_1}\}$, we see that $E_+(tu_{1,p_1},tu_{1,p_2})<0$. Therefore, $(w_1,w_2)\neq (0,0)$.

	Next, let us now prove that both components of $w_i$ are nontrivial. Suppose that $w_1\neq 0$ and $w_2=0$. From hypothesis \eqref{H2} we find a number $\delta>0$ small enough such that
	\begin{align*}
		g(x,s_1,s_2)-g(x,s_1,0) > \lambda_{1,p_2} \frac{s_2^{p_2}}{p_2}
	\end{align*}
	for a.a.\,$x\in\partial\Omega$, for all $s_1\in (0,k_1]$ and for all $s_2 \in (0,\delta]$. Using this fact together with $\|u_{1,p_2}\|_{p_2,\partial\Omega}^{p_2}=1$, we obtain for $t>0$ small enough that
	\begin{align*}
		E_+(w_1,tu_{1,p_2})
		&=E_+(w_1,0)+\lambda_{1,p_2}\frac{t^{p_2}}{p_2}\\
		&\quad -\int_{\partial\Omega} \left(g(x,w_1,tu_{1,p_2})-g(x,w_1,0)\right) \,\mathrm{d}\sigma\\
		&<E_+(w_1,0),
	\end{align*}
	which is a contradiction since $(w_1,0)$ is the global minimizer of $E_+$. A similar argument can be used in order to show that $w_2\neq0$. Therefore, we have that $w_1\neq 0$ and $w_2 \neq 0$.

	As a critical point of $E_+$ is understood in the sense of \eqref{problem3} with \eqref{expressions-h}, we know from Proposition \ref{prop_location_critical_points} \textnormal{(i)} along with \eqref{representation_gradient_g} that $(w_1,w_2)$ is a solution of problem \eqref{problem2}. Applying the regularity theory, as before, yields that $(w_1,w_2)\in \interior\times\interior$. Then, combining Proposition \ref{prop_location_critical_points} and the fact that $(u_{1,+},u_{2,+})$ is the minimal positive solution of \eqref{problem2}, we conclude that $(w_1,w_2)=(u_{1,+},u_{2,+})$. Therefore, we know that $(u_{1,+},u_{2,+})$ is a local minimizer of $E_0$ on $C^1(\overline{\Omega})\times C^1(\overline{\Omega})$ since the functionals coincide on $\interior\times\interior$. Then, from Bai-Gasi\'{n}ski-Winkert-Zeng \cite{Bai-Gasinski-Winkert-Zeng-2020}, we know that $(u_{1,+},u_{2,+})$ is a local minimizer of $E_0$ on $\mathcal{W}$. In a similar way, by using (ii) instead of (i) in Proposition \ref{prop_location_critical_points} (2) and \eqref{representation_gradient_g-2} instead of \eqref{representation_gradient_g}, we can show the results of  $(u_{1,-},u_{2,-})$.
\end{proof}

For the next result, we need associated scalar problems of \eqref{problem2} defined by

\begin{equation}\label{scalar-1}
	\begin{aligned}
		-\Delta_{p_1}u_1
		& =-|u_1|^{p_1-2}u_1 \quad && \text{in } \Omega,\\
		|\nabla u_1|^{p_1-2}\nabla u_1 \cdot \nu
		&=g_{s_1}(x,u_1,0) && \text{on } \partial\Omega,\
	\end{aligned}
\end{equation}
and
\begin{equation}\label{scalar-2}
	\begin{aligned}
		-\Delta_{p_2}u_2
		& =-|u_2|^{p_2-2}u_2 \quad && \text{in } \Omega,\\
		|\nabla u_2|^{p_2-2}\nabla u_2 \cdot \nu
		&=g_{s_2}(x,0,u_2)  && \text{on } \partial\Omega.
	\end{aligned}
\end{equation}

We have the following result.

\begin{proposition}\label{prop_auxiliary_problem}
	Let hypotheses \eqref{H0}--\eqref{H3} be satisfied, where $(g_1,g_2)$ is replaced by $\nabla g$ and suppose \eqref{H4}. Then there exists $(u_+,v_+)\in \interior\times\interior$ such that $u_+$ is a solution of \eqref{scalar-1} and $v_+$ is a solution of \eqref{scalar-2} satisfying
	\begin{align*}
		u_+ \leq u_{1,+}, \quad v_+\leq u_{2,+},\quad
		E_+(u_+,0)=\inf E_+(\cdot,0), \quad E_+(0,v_+)=\inf E_+(0,\cdot).
	\end{align*}
	Furthermore, there exists $(u_-,v_-)\in (-\interior)\times(-\interior)$ such that $u_-$ is a solution of \eqref{scalar-1} and $v_-$ is a solution of \eqref{scalar-2} satisfying
	\begin{align*}
		u_- \geq u_{1,-}, \quad v_-\geq u_{2,-},\quad
		E_-(u_-,0)=\inf E_-(\cdot,0), \quad E_-(0,v_-)=\inf E_-(0,\cdot).
	\end{align*}
\end{proposition}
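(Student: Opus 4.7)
The plan is to obtain each of the four auxiliary solutions by a direct minimization argument on a one-variable restriction of the truncated functionals; I only sketch the construction of $u_+$, since $v_+$ is symmetric in the second component and the pair $(u_-,v_-)$ is obtained the same way starting from $E_-$. Consider the locally Lipschitz functional $F_+\colon \mathcal{V}_1\to\R$ defined by $F_+(u_1):=E_+(u_1,0)$. Because $\tau_+$ keeps the argument of $g$ in the bounded rectangle $[0,u_{1,+}(x)]\times[0,u_{2,+}(x)]$, hypothesis \eqref{H0} and the boundedness of $g_{s_1},g_{s_2}$ on bounded sets force the boundary integral in $F_+$ to grow at most linearly in $\|u_1\|_{L^{p_1}(\partial\Omega)}$; together with the trace embedding, $F_+$ is coercive on $\mathcal{V}_1$ and sequentially weakly lower semicontinuous (the gradient term is convex-continuous, the boundary term is weakly continuous by the compactness of the trace). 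Hence a global minimizer $u_+\in\mathcal{V}_1$ exists.

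To ensure $u_+\neq 0$, I would follow the eigenfunction test used in Proposition \ref{prop_local_minimizers}. Applying the mean value theorem to $g_+$ along the segment from $0$ to $tu_{1,p_1}$ in the first variable, using \eqref{H2} and \eqref{H4}, and the identity $\|\nabla u_{1,p_1}\|_{p_1}^{p_1}+\|u_{1,p_1}\|_{p_1}^{p_1}=\lambda_{1,p_1}$, one obtains
\begin{align*}
E_+(tu_{1,p_1},0)\leq (\lambda_{1,p_1}-c_1+\varepsilon)\tfrac{t^{p_1}}{p_1}
\end{align*}
for any $\varepsilon>0$ and $t>0$ small. Choosing $\varepsilon<c_1-\lambda_{1,p_1}$ makes this strictly negative, so $F_+(u_+)<0=F_+(0)$.

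Since $0\in \partial F_+(u_+)$ in the nonsmooth sense, $u_+$ is a weak solution of the scalar inclusion $-\Delta_{p_1}u_+=-|u_+|^{p_1-2}u_+$ in $\Omega$, $|\nabla u_+|^{p_1-2}\nabla u_+\cdot\nu=h(x)$ on $\partial\Omega$, with $h(x)\in \partial_{s_1}g_+(x,u_+(x),0)$. To locate $u_+$ I would repeat the test-function argument of Proposition \ref{prop_location_critical_points}(i) using $(u_+-u_{1,+})^+\in\mathcal{V}_1$: the upper-bound computation collapses onto the term involving $g_{s_1}(x,u_{1,+},0)-g_{s_1}(x,u_{1,+},u_{2,+})\le 0$ by \eqref{H4}(i) (the other boundary pieces vanish since $v_2\equiv 0$), yielding $u_+\leq u_{1,+}$; testing with $-u_+^-$ and using $g_{s_1}(x,0,0)=0$, which is a consequence of \eqref{H2} and \eqref{H3}, gives $u_+\geq 0$. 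On the range $[0,u_{1,+}]\times\{0\}$, formula \eqref{representation_gradient_g} forces $h(x)=g_{s_1}(x,u_+(x),0)$, so $u_+$ solves \eqref{scalar-1}. Standard regularity (Lieberman's theorem, together with \eqref{H0}) then gives $u_+\in C^{1,\alpha}(\overline{\Omega})$, and the scalar version of the maximum principle argument in Remark \ref{regularity-argument} (now using only the first line of \eqref{H2} with $s_2=0$, valid by continuity in $s_2$) upgrades this to $u_+\in\interior$.

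The identical scheme, performed on $u_2\mapsto E_+(0,u_2)$ with $u_{1,p_2}$ as the test direction and \eqref{H4}(ii) in the location step, produces $v_+\in\interior$ satisfying \eqref{scalar-2} and $v_+\leq u_{2,+}$. For the negative statements I would replace $E_+$ by $E_-$ and use the eigenfunctions $-tu_{1,p_i}$, with the sign-changing versions of \eqref{H2}--\eqref{H3} providing the strict-descent step and the analogue of Proposition \ref{prop_location_critical_points}(ii) providing the location. The main technical point I expect is the location step: one must check that when restricting to the scalar slice the generalized subdifferential of $g_+$ reduces to a genuine derivative on the interval $[0,u_{1,+}]$, so that the comparison with $u_{1,+}$ via the monotonicity assumption \eqref{H4}(i) goes through; \eqref{representation_gradient_g} is precisely what makes this routine.
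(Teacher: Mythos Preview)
Your proposal is correct and follows essentially the same approach as the paper's proof: minimize the one-variable restriction $E_+(\cdot,0)$ (coercivity plus weak lower semicontinuity), then use the test function $(u_+-u_{1,+})^+$ together with \eqref{H4}(i) and the fact that $(u_{1,+},u_{2,+})$ solves \eqref{problem2} to trap $u_+$ below $u_{1,+}$, identify $u_+$ as a solution of \eqref{scalar-1} via \eqref{representation_gradient_g}, and conclude with regularity. In fact you supply two details the paper leaves implicit---the nontriviality of $u_+$ via the eigenfunction test $t u_{1,p_1}$, and the lower bound $u_+\geq 0$ via the test function $-u_+^-$ and $g_{s_1}(x,0,0)=0$---both of which are needed to reach $u_+\in\interior$.
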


\begin{proof}
	Note that $E_+(\cdot,0)\colon W^{1,p_1}(\Omega)\to \R$ is coercive and sequentially weakly lower semicontinuous. Hence, we can find $u_+\in W^{1,p_1}(\Omega)$ such that
	\begin{align*}
		E_+(u_+,0)=\inf E_+(\cdot,0).
	\end{align*}
	Therefore, $u_+$ is a critical point of $E_+(\cdot,0)$, that is, $0 \in \partial E_+(\cdot,0)(u_+)$. By means of \eqref{H4} we have with nonnegative test function $\varphi_1$
	\begin{align*}
		&\int_\Omega |\nabla u_{1,+}|^{p_1-2}\nabla u_{1,+} \cdot\nabla  \varphi_1\,\mathrm{d}x +\int_{\Omega} u_{1,+}^{p_1-1}\varphi_1\,\mathrm{d}x\\
		&= \int_{\partial\Omega} g_{s_1}(x,u_{1,+},u_{2,+})\varphi_1\,\diff\sigma
		\geq \int_{\partial\Omega} g_{s_1}(x,u_{1,+},0)\varphi_1\,\diff\sigma.
	\end{align*}
Using this and the fact that $u_+$ solves
	\begin{align*}
		&\int_\Omega |\nabla u_{+}|^{p_1-2}\nabla u_{+} \cdot\nabla \varphi_1\,\mathrm{d}x +\int_{\Omega} |u_{+}|^{p_1-2}u_{+}\varphi_1\,\mathrm{d}x
		= \int_{\partial \Omega}
		 (g_+)_{s_1}(x,u_{+},0)\varphi_1\,\mathrm{d} \sigma
	 \end{align*}
	 we get, with the test function $(u_+-u_{1,+})^+\in W^{1,p_1}(\Omega)$, that
	\begin{align*}
		 \int_\Omega&\left(|\nabla u_+|^{p_1-2}\nabla u_+-|\nabla u_{1,+}|^{p_1-2}\nabla u_{1,+}\right)\nabla(u_+-u_{1,+})^+\diff x\\
		 &+\int_\Omega \left (|u_+|^{p_1-2}u_+-u_{1,+}^{p_1-1} \right)(u_+-u_{1,+})^+\,\mathrm{d}x\\
		&\leq \int_{\{u_+>u_{1,+}\}} \left((g_+)_{s_1}(x,u_+,0)-g_{s_1}(x,u_{1,+},u_{2,+})\right) (u_+-u_{1,+}) \,\mathrm{d}\sigma\\
		&=\int_{\{u_+>u_{1,+}\}} \left(g_{s_1}(x,u_{1,+},0)-g_{s_1}(x,u_{1,+},u_{2,+})\right) (u_+-u_{1,+}) \,\mathrm{d}\sigma\leq 0.
	\end{align*}
	This implies $0 \leq u_+(x)\leq u_{1,+}(x)$ for a.a.\,$x\in\partial\Omega$. Therefore, $u_+$ is a solution of \eqref{scalar-1}. Applying again the regularity results, as before, we get that $u_+\in\interior$. The proofs for $v_+$, $u_-$ and $v_-$ can be done in a very similar way.
\end{proof}

For our main result, we need the following sub-homogeneous conditions on the right-hand sides of \eqref{scalar-1} and \eqref{scalar-2}.

\begin{enumerate}[label=\textnormal{(H$_5$)},ref=\textnormal{H$_5$}]
	\item\label{H5}
	For any $t\in [0,1]$ the following hold:
	\begin{enumerate}
		\item[\textnormal{(i)}]
			$g_{s_1}(x,ts_1,0) \leq t^{p_1-1} g_{s_1}(x,s_1,0)$ for a.a.\,$x\in\partial\Omega$ and for all $s_1 \in [d_1,0]$;
		\item[\textnormal{(ii)}]
			$g_{s_1}(x,ts_1,0) \geq t^{p_1-1} g_{s_1}(x,s_1,0)$ for a.a.\,$x\in\partial\Omega$ and for all $s_1 \in [0,k_1]$.
	\end{enumerate}
\end{enumerate}

\begin{enumerate}[label=\textnormal{(H$_6$)},ref=\textnormal{H$_6$}]
	\item\label{H6}
	For any $t\in [0,1]$ the following hold:
	\begin{enumerate}
		\item[\textnormal{(i)}]
		$g_{s_2}(x,0,ts_2) \leq t^{p_2-1} g_{s_2}(x,0,s_2)$ for a.a.\,$x\in\partial\Omega$ and for all $s_2 \in [d_2,0]$;
		\item[\textnormal{(ii)}]
		$g_{s_2}(x,0,ts_2) \geq t^{p_2-1} g_{s_2}(x,0,s_2)$ for a.a.\,$x\in\partial\Omega$ and for all $s_2 \in [0,k_2]$.
	\end{enumerate}
\end{enumerate}

Now we can formulate and prove our main result.

\begin{theorem}
	Let hypotheses \eqref{H0}--\eqref{H3} be satisfied, where $(g_1,g_2)$ is replaced by $\nabla g$ and suppose \eqref{H4}--\eqref{H6}. Moreover, we replace in \eqref{H2} the eigenvalues $\lambda_{1,p_i}$ by $\lambda_{2,p_i}$ for $i=1,2$, where $\lambda_{2,p_i}$ is the second eigenvalue of the $p_i$-Laplacian with Steklov boundary condition. Then, the system \eqref{problem2} has at least three nontrivial solutions, that is, a minimal positive solution
	\begin{align*}
	(u_{1,+},u_{2,+}) \in \interior\times\interior,
	\end{align*}
	a maximal negative solution
	\begin{align*}
		(u_{1,-},u_{2,-})\in (-\interior)\times (-\interior),
	\end{align*}
	and a third solution $(u_{1,0},u_{2,0}) \in C^1(\overline{\Omega})\times C^1(\overline{\Omega})$ such that $(u_{1,0},u_{2,0}) \neq (0,0)$ and
	\begin{align*}
		u_{1,-} \leq u_{1,0} \leq u_{1,+}
		\quad\text{and}\quad
		u_{2,-} \leq u_{2,0} \leq u_{2,+}.
	\end{align*}
\end{theorem}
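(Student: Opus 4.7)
The approach is via the nonsmooth mountain-pass theorem (Theorem \ref{mountain-pass-theorem}) applied to the locally Lipschitz truncated functional $E_0\colon \mathcal{W}\to\R$, using as the two ``peaks'' the local minimizers $(u_{1,+},u_{2,+})$ and $(u_{1,-},u_{2,-})$ furnished by Proposition \ref{prop_local_minimizers}. First I would check that $E_0$ is coercive (immediate from the boundedness of $g_0$) and satisfies the nonsmooth Palais--Smale condition, using the truncation together with the $(S_+)$-property of $A_{p_i}$ from Proposition \ref{proposition-properties-operator} and the compactness of the trace embedding $W^{1,p_i}(\Omega)\hookrightarrow L^{p_i}(\partial\Omega)$ to upgrade weak to strong convergence along a bounded Palais--Smale sequence. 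The standard mountain-pass geometry around $(u_{1,\pm},u_{2,\pm})$, with small balls on whose boundaries $E_0$ strictly exceeds the two local minimum values, then yields a critical point $(u_{1,0},u_{2,0})$ with
\begin{equation*}
E_0(u_{1,0},u_{2,0})=\inf_{\pi\in\Pi}\max_{t\in[0,1]}E_0(\pi(t)),
\end{equation*}
where $\Pi$ is the set of continuous paths in $\mathcal{W}$ joining the two minimizers. By Proposition \ref{prop_location_critical_points}\textnormal{(iii)} and the representation \eqref{representation_gradient_g-3}, any such critical point of $E_0$ is a weak solution of \eqref{problem2} lying in the order rectangle $[u_{1,-},u_{1,+}]\times[u_{2,-},u_{2,+}]$, and Remark \ref{regularity-argument} places it in $C^1(\overline\Omega)\times C^1(\overline\Omega)$.

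The crux is to rule out $(u_{1,0},u_{2,0})=(0,0)$; since $E_0(0,0)=0$, it suffices to produce a single admissible path $\pi_0\in\Pi$ along which $E_0$ stays bounded above by a strictly negative constant. I would build $\pi_0$ by concatenating three pieces: a segment from $(u_{1,-},u_{2,-})$ to the scalar minimizers $(u_-,v_-)$ from Proposition \ref{prop_auxiliary_problem}; a small two-component ``second-eigenvalue swing'' from $(u_-,v_-)$ through a neighborhood of $(0,0)$ to $(u_+,v_+)$; and a segment from $(u_+,v_+)$ to $(u_{1,+},u_{2,+})$. On the outer segments, the sub-homogeneity hypotheses \eqref{H5}, \eqref{H6} together with Proposition \ref{prop_auxiliary_problem} keep $E_0$ strictly negative. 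The swing is modeled on the variational characterization
\begin{equation*}
\lambda_{2,p_i}=\inf_{\hat\gamma\in\hat\Gamma(p_i)}\max_{t\in[-1,1]}\Big[\|\nabla\hat\gamma(t)\|_{p_i}^{p_i}+\|\hat\gamma(t)\|_{p_i}^{p_i}\Big];
\end{equation*}
for each $i$, after rescaling a near-optimal $\hat\gamma_i$ by a small parameter $\varepsilon_i>0$, the strengthened inequality $c_i>\lambda_{2,p_i}$ coming from the modified hypothesis \eqref{H2} forces the scalar part of $E_0$ along $t\mapsto\varepsilon_i\hat\gamma_i(t)$ to be bounded above by $(\varepsilon_i^{p_i}/p_i)(\lambda_{2,p_i}-c_i+o(1))<0$.

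I expect the main technical obstacle to be combining the two scalar swings into a joint two-component path and controlling the resulting coupling term $\int_{\partial\Omega}[g(x,s_1,s_2)-g(x,s_1,0)-g(x,0,s_2)]\,\diff\sigma$. By hypothesis \eqref{H4} and a Taylor expansion of $g$ at the origin, where $g$ and both first partials vanish by virtue of \eqref{H2} and \eqref{H3}, this coupling is of strictly higher order in $(\varepsilon_1,\varepsilon_2)$ than the diagonal quantities, and hence absorbable into the strict gap $c_i-\lambda_{2,p_i}>0$ once $\varepsilon_i$ is chosen small enough. With such an estimate in place, the path $\pi_0$ produces $E_0(u_{1,0},u_{2,0})<0=E_0(0,0)$, forcing $(u_{1,0},u_{2,0})\neq(0,0)$, and the final location bounds $u_{1,-}\le u_{1,0}\le u_{1,+}$, $u_{2,-}\le u_{2,0}\le u_{2,+}$ are read off from Proposition \ref{prop_location_critical_points}\textnormal{(iii)}.
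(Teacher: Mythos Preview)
Your overall framework---mountain pass for $E_0$, Palais--Smale via coercivity and $(S_+)$, localization via Proposition~\ref{prop_location_critical_points}(iii)---matches the paper. The gap is in the construction of the admissible path with $E_0<0$.

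First, the outer pieces do not work as described. Hypotheses \eqref{H5} and \eqref{H6} concern only $g_{s_1}(x,\cdot,0)$ and $g_{s_2}(x,0,\cdot)$; they control $E_0$ along paths lying on the coordinate axes of $\mathcal{W}$, not along a linear segment from the full two-component point $(u_{1,+},u_{2,+})$ to $(u_+,v_+)$. The paper does \emph{not} connect these points by a segment; it invokes the nonsmooth second deformation lemma for $E_+$, after first verifying (the ``Claim'') that $E_+$ has no critical values in $(m,c]$ other than the global minimum, which uses the minimality of $(u_{1,+},u_{2,+})$ and Proposition~\ref{prop_auxiliary_problem}. This deformation yields a path $\gamma_+$ joining $(u_+,\varepsilon u_{1,p_2})$ to $(u_{1,+},u_{2,+})$ along which $E_0=E_+\le E_+(u_+,\varepsilon u_{1,p_2})<E_+(u_+,0)<0$. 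The analogous $\gamma_-$ is built from $E_-$.

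Second, your middle ``swing'' has an endpoint mismatch and an unnecessary coupling difficulty. The rescaled curves $\varepsilon_i\hat\gamma_i$ run from $-\varepsilon_i u_{1,p_i}$ to $\varepsilon_i u_{1,p_i}$, not from $u_-,v_-$ to $u_+,v_+$, so they do not glue to your outer segments. The paper avoids this by swinging in \emph{one} component only: it takes $\gamma_0(t)=(\gamma_1(t),\varepsilon\gamma_{0,2}(t))$, where $\gamma_{0,2}$ is the second-eigenvalue path for the $p_2$-Laplacian and $\gamma_1$ is the \emph{linear} path $u_-\to 0\to u_+$ in the first component. The strengthened \eqref{H2} with $\lambda_{2,p_2}$ gives $E_0(\gamma_0(t))\le E_0(\gamma_1(t),0)+\tfrac{\varepsilon^{p_2}}{p_2}(\lambda_{2,p_2}+\hat\mu-c_2+\tilde\mu)$, and \eqref{H5} is used exactly here to show $E_0(\gamma_1(s),0)\le 0$. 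There is no cross-term $g(x,s_1,s_2)-g(x,s_1,0)-g(x,0,s_2)$ to estimate; your Taylor-expansion step is not needed, and without further assumptions on the second mixed derivative $g_{s_1 s_2}$ near the origin it is not clear it would go through uniformly on $\partial\Omega$.
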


\begin{proof}
	The existence of a minimal positive solution $(u_{1,+},u_{2,+}) \in \interior\times\interior$ and a maximal negative solution $(u_{1,-},u_{2,-})\in (-\interior)\times (-\interior)$ of \eqref{problem2} follows from Theorem \ref{theorem-minimal-maximal-constant-sign-solutions}. By Proposition \ref{prop_local_minimizers} we know that both pairs $(u_{1,+},u_{2,+}) $ and $(u_{1,-},u_{2,-})$ are local minimizers of the functional $E_0$. Since they are extremal positive and negative solutions of \eqref{problem2}, taking Proposition \ref{prop_location_critical_points} into account, we can suppose that they are strict local minimizers. We also point out that the functional $E_0$ fulfills the nonsmooth Palais-Smale condition (see, for example, Motreanu-R\u{a}dulescu \cite[Definitions 1.5--1.7]{Motreanu-Radulescu-2003}) since $E_0$ is coercive. This allows us to apply the nonsmooth version of the mountain-pass theorem stated in Theorem \ref{mountain-pass-theorem} which gives us a critical point $(u_{1,0},u_{2,0})\in\mathcal{W}$ of $E_0$, that is,
	\begin{align*}
		(0,0) \in \partial E_0 (u_{1,0},u_{2,0})
	\end{align*}
	satisfying
	\begin{equation}\label{sign-changing-1}
		\begin{aligned}
			&\max \left\{ E_0(u_{1,+},u_{2,+}),E_0(u_{1,-},u_{2,-})\right\}\\
			&<E_0(u_{1,0},u_{2,0})=\inf_{\gamma \in \Gamma} \max_{-1\leq t \leq 1} E_0(\gamma(t)),
		\end{aligned}
	\end{equation}
	where
	\begin{align}\label{sign-changing-15}
		\Gamma=\left \{\gamma \in C([0,1],\mathcal{W})\colon \gamma(0)=(u_{1,-},u_{2,-}),\, \gamma(1)=(u_{1,+},u_{2,+}) \right\}.
	\end{align}
	Clearly, from \eqref{representation_gradient_g-3} as well as Proposition \ref{prop_location_critical_points} (iii) and the expression of the generalized gradient $\partial E_0(u_{1,0},u_{2,0})$, we see that $(u_{1,0},u_{2,0})$ is a solution of \eqref{problem2}. Furthermore, because of \eqref{sign-changing-1}, we directly conclude that
	\begin{align*}
		(u_{1,0},u_{2,0})\neq (u_{1,+},u_{2,+})
		\quad\text{and}\quad
		(u_{1,0},u_{2,0})\neq (u_{1,-},u_{2,-}).
	\end{align*}
	It remains to show that $(u_{1,0},u_{2,0}) \neq (0,0)$. The idea is to construct a path $\tilde{\gamma} \in \Gamma$ such that
	\begin{align*}
		E_0(\tilde{\gamma}(t)) <0 \quad\text{for all }t \in [0,1].
	\end{align*}

	From Proposition \ref{prop_auxiliary_problem} we know that $u_+$ and $v_+$ are the positive solutions of \eqref{scalar-1} and \eqref{scalar-2} while $u_-$ and $v_-$ are the negative solutions of \eqref{scalar-1} and \eqref{scalar-2}, respectively. Let us assume that
	\begin{align}\label{sign-changing-4}
		E_+(u_+,0)\leq E_+(0,v_+),
	\end{align}
	the case $E_+(0,v_+)<E_+(u_+,0)$ can be handled in the same way. For $\varepsilon>0$ sufficiently small we set
	\begin{align}\label{sign-changing-5}
		m:=E_+(u_{1,+},u_{2,+})
		\quad\text{and}\quad
		c=E_+(u_+,\varepsilon u_{1,p_2}).
	\end{align}
	Since $(u_{1,+},u_{2,+})$ is the unique global minimizer of $E_+$, see Proposition \ref{prop_local_minimizers}, we see that $m<c$.

	{\bf Claim:} There are no other critical values of $E_+$ in the interval $(m,c]$.

	Due to Proposition \ref{prop_location_critical_points} (i), the representation of the generalized gradient in \eqref{representation_gradient_g} and the fact that $(u_{1,+},u_{2,+})$ is a minimal positive solution of problem \eqref{problem2}, it is clear that we cannot have critical points of $E_+$ whose both components are positive others than $(u_{1,+},u_{2,+})$. Using again hypothesis \eqref{H2} we have for $\varepsilon>0$ small enough that
	\begin{align}\label{sign-changing-2}
		g(x,u_+,\varepsilon u_{1,p_2})-g(x,u_+,0) >\lambda_{1,p_2} \frac{\varepsilon^{p_2}}{p_2} u_{1,p_2}^{p_2}
	\end{align}
	for a.a.\,$x\in\partial\Omega$. From \eqref{sign-changing-2} we obtain, since $\|u_{1,p_2}\|_{p_2,\partial\Omega}^{p_2}=1$, that
	\begin{equation}\label{sign-changing-3}
		\begin{aligned}
			E_+(u_+,\varepsilon u_{1,p_2})
			&=E_+(u_+,0)+\lambda_{1,p_2}\frac{\varepsilon^{p_2}}{p_2}\\
			&\quad -\int_{\partial\Omega} \left(g(x,u_+,\varepsilon u_{1,p_2})-g(x,u_+,0)\right) \,\mathrm{d}\sigma\\
			&<E_+(u_+,0).
	\end{aligned}
	\end{equation}
	Therefore, from \eqref{sign-changing-3}, \eqref{sign-changing-4}, \eqref{sign-changing-5} and Proposition \ref{prop_auxiliary_problem} we conclude that there are no critical values of $E_+$ in the interval $(m,c]$ associated to critical points with one positive component and the other one equal to zero. This proves the Claim.

	Because of the Claim, we can now apply the nonsmooth version of the second deformation lemma to the functional $E_+$, see Gasi\'{n}ski-Papageorgiou \cite[Theorem 2.1.1]{Gasinski-Papageorgiou-2005}. This gives us a continuous map $\eta=(\eta_1,\eta_2)\colon [0,1]\times E^{-1}_+((-\infty,c])\to E^{-1}_+((-\infty,c])$ such that
	\begin{equation}\label{sign-changing-6}
		\begin{aligned}
			&\eta(0,u_1,u_2)=(u_1,u_2), \quad \eta(1,u_1,u_2)=(u_{1,+},u_{2,+}),\\
			& E_+(\eta(t,u_1,u_2)) \leq E_+(u_1,u_2)
		\end{aligned}
	\end{equation}
	for all $t \in [0,1]$ and for all $(u_1,u_2) \in E^{-1}_+((-\infty,c])$. Based on \eqref{sign-changing-6}, we define a path $\gamma_+\in C([0,1],\mathcal{W})$ by
	\begin{align*}
		\gamma_+(t)=(\eta_1(t,u_+,\varepsilon u_{1,p_2})^+,\eta_2(t,u_+,\varepsilon u_{1,p_2})^+)\quad\text{for all }t \in[0,1].
	\end{align*}
	Obviously, the path $\gamma_+$ joins $(u_+,\varepsilon u_{1,p_2})$ and $(u_{1,+},u_{2,+})$. Taking \eqref{sign-changing-6} and \eqref{sign-changing-3} into account, we derive that
	\begin{equation}\label{sign-changing-8}
		\begin{aligned}
			E_0(\gamma_+(t))
			&=E_+(\gamma_+(t))
			\leq E_+(\eta_1(t,u_+,\varepsilon u_{1,p_2})^+,\eta_2(t,u_+,\varepsilon u_{1,p_2})^+)\\
			&\leq E_+(u_+,\varepsilon u_{1,p_2})
			< E_+(u_+,0) \leq E_+(0,v_+)
		\end{aligned}
	\end{equation}
	for all $t\in [0,1]$ and for $\varepsilon>0$ sufficiently small.

	Next, we can suppose, without any loss of generality, that
	\begin{align*}
		E_-(u_-,0) \leq E_-(0,v_-).
	\end{align*}
	Then, as above, we can construct a path $\gamma_- \in C([0,1],\mathcal{W})$ such that $\gamma_-(0)=(u_-,-\varepsilon u_{1,p_2})$, $\gamma_-(1)=(u_{1,-},u_{2,-})$ and
	\begin{equation}\label{sign-changing-9}
		\begin{aligned}
			E_0(\gamma_-(t))< E_-(u_-,0) \leq E_-(0,v_-)<0
		\end{aligned}
	\end{equation}
	for all $t\in [0,1]$ and for $\varepsilon>0$ sufficiently small.

	Now, let $S_i=W^{1,p_i}(\Omega) \cap \partial B_1^{p_i, \partial \Omega}$ with $\partial B^{p_i, \partial \Omega}_1=\{u \in L^{p_i}(\partial\Omega)\colon \|u\|_{p_i, \partial \Omega}=1\}$ be endowed with the topology induced by $W^{1,p_i}(\Omega)$ for $i=1,2$ and let $S_{i,C}=S_i\cap C^1(\overline{\Omega})$ be endowed with the topology induced by $C^1(\overline{\Omega})$. We set
	\begin{align*}
		\Gamma_{0,i}&=\left\{\gamma \in C([0,1],S_i)\colon \gamma(0)=-u_{1,p_i}, \, \gamma(1)=u_{1,p_i} \right\},\\
		\Gamma_{0,i,C}&=\left\{\gamma \in C([0,1],S_{i,C})\colon \gamma(0)=-u_{1,p_i},\, \gamma(1)=u_{1,p_i}\right\}
	\end{align*}
	for $i=1,2$.

	Now let us fix constants $\tilde{\mu} \in (0,c_2-\lambda_{2,p_2})$ and $\hat{\mu}\in (0,c_2-\lambda_{2,p_2}-\mu)$ with $c_2$ as in \eqref{H2}. Then, the density of $S_{2,C}$ in $S_2$ (which implies the density of $\Gamma_{0,2,C}$ in $\Gamma_{0,2}$, see Winkert \cite{Winkert-PHD-2009} for a proof of it), guarantees that we can find a path $\gamma_{0,2}\in\Gamma_{0,2,C}$ such that
	\begin{align}\label{sign-changing-10}
		\max_{u\in \gamma_{0,2}([0,1])} \|u\|_{1,p_2}^{p_2} <\lambda_{2,p_2}+\hat{\mu}.
	\end{align}
	Note that we supposed hypothesis \eqref{H2} with $\lambda_{1,p_i}$ replaced by $\lambda_{2,p_i}$. Then it follows that we can find $\delta>0$ such that
	\begin{align}\label{sign-changing-11}
		g(x,s_1,s_2) -g(x,s_1,0) > (c_2-\tilde{\mu}) \frac{s_2^{p_2}}{p_2}
	\end{align}
	for a.a.\,$x\in\partial\Omega$, for all $s_1\in [d_1,k_1]$ and for all $s_2\in  (0,\delta)$. Now, we can choose $\varepsilon>0$ sufficiently small such that
	\begin{align}\label{sign-changing-12}
		\varepsilon|\gamma_{0,2}(t)(x)|<\delta\quad\text{for all }t\in [0,1]\text{ and for a.a.\,}x\in\partial\Omega.
	\end{align}
	Combining \eqref{sign-changing-10}, \eqref{sign-changing-11}, \eqref{sign-changing-12} and the fact that $\|\gamma_{0,2}(t)\|_{p_2,\partial\Omega}^{p_2}=1$ for all $t\in [0,1]$, we obtain
	\begin{equation}\label{sign-changing-13}
		\begin{aligned}
			E_0(v,\varepsilon \gamma_{0,2}(t))
			&=\frac{1}{p_1} \|v\|_{1,p_1}^{p_1}+ \frac{\varepsilon^{p_2}}{p_2} \|\gamma_{0,2}(t)\|_{1,p_2}^{p_2}-\int_{\partial\Omega} g(x,v,\varepsilon \gamma_{0,2}(t))\,\diff\sigma\\
			&=E(v,0)+ \frac{\varepsilon^{p_2}}{p_2} \|\gamma_{0,2}(t)\|_{1,p_2}^{p_2}\\
			&\quad+\int_{\partial\Omega} \left(g(x,v,0)-g(x,v,\varepsilon \gamma_{0,2}(t))\right) \,\diff\sigma\\
			& \leq E(v,0)+\frac{\varepsilon^{p_2}}{p_2} \left(\lambda_{2,p_2}+\hat{\mu}-c_2+\tilde{\mu}\right)
		\end{aligned}
	\end{equation}
	for all $t\in [0,1]$ and for all $v\in W^{1,p_1}(\Omega)$ with $v\in [u_{1,-},u_{2,+}]$. Now we take a continuous path $\gamma_1\colon [0,1]\to C^1(\overline{\Omega})$ such that $\gamma_1(0)=u_-$, $\gamma_1(1)=u_+$ and we set $\gamma_0(t)=(\gamma_1(t),\varepsilon \gamma_{0,2}(t))$. Then we get a path with the endpoints $(u_-,-\varepsilon u_{1,p_2})$ and $(u_+,\varepsilon u_{1,p_2})$ such that, due to \eqref{sign-changing-13},
	\begin{align}\label{sign-changing-14}
		E_0(\gamma_0(t)) \leq E_0(\gamma_1(t),0)+\frac{\varepsilon^{p_2}}{p_2} \left(\lambda_{2,p_2}+\hat{\mu}-c_2+\tilde{\mu}\right)
	\end{align}
	for all $t\in [0,1]$. The concatenation of the paths $\gamma_-, \gamma_0$ and $\gamma_+$ generates a path $\tilde{\gamma}$ which satisfies, because of \eqref{sign-changing-8}, \eqref{sign-changing-9}, and \eqref{sign-changing-14}
	\begin{align*}
		E_0(\tilde{\gamma}(t)) \leq \max_{t\in[0,1]}E_0(\gamma_1(t),0)+\frac{\varepsilon^{p_2}}{p_2} \left(\lambda_{2,p_2}+\hat{\mu}-c_2+\tilde{\mu}\right)
	\end{align*}
	for all $t\in [0,1]$. From \eqref{sign-changing-1} and \eqref{sign-changing-15} we see that
	\begin{align}\label{sign-changing-16}
		E_0(u_{1,0},u_{2,0}) \leq \max_{t\in[0,1]} E_0(\gamma_1(t),0)+\frac{\varepsilon^{p_2}}{p_2} \left(\lambda_{2,p_2}+\hat{\mu}-c_2+\tilde{\mu}\right).
	\end{align}
	Recall that $\hat{\mu}\in (0,c_2-\lambda_{2,p_2}-\mu)$. Therefore,
	\begin{align}\label{sign-changing-17}
		\frac{\varepsilon^{p_2}}{p_2} \left(\lambda_{2,p_2}+\hat{\mu}-c_2+\tilde{\mu}\right)<0.
	\end{align}
	This means, with regard to \eqref{sign-changing-16} and \eqref{sign-changing-17}, we only have to prove the existence of a continuous path $s\mapsto \gamma_1(s)$ with $\gamma_1(0)=u_-$ and $\gamma_1(1)=u_+$ satisfying
	\begin{align}\label{sign-changing-24}
		E_0(\gamma_1(s),0)\leq 0 \quad\text{for all }s\in [0,1].
	\end{align}

	We define the path $\gamma_1$ by
	\begin{align*}
		\gamma_1(s)=
		\begin{cases}
			(1-2s)u_- &\text{if }s \in \left[0,\frac{1}{2}\right],\\[1ex]
			(2s-1)u_+ & \text{if }s\in \left[\frac{1}{2},1\right].
		\end{cases}
	\end{align*}
	Applying $g_0(\cdot,(1-2s)u_-,0)=g(\cdot,(1-2s)u_-,0)$, we get for $s\in [0,\frac{1}{2}]$
	\begin{align}\label{sign-changing-18}
		E_0(\gamma_1(s),0)=\frac{1}{p_1}(1-2s)^{p_1} \|u_-\|_{1,p_1}^{p_1}-\int_{\partial\Omega} g(x,(1-2s)u_-,0)\,\diff\sigma.
	\end{align}
	Since $u_-$ is a solution of \eqref{scalar-1}, it holds
	\begin{align}\label{sign-changing-19}
		\|u_-\|_{1,p_1}^{p_1}=\int_{\partial\Omega} g_{s_1}(x,u_-,0)u_-\,\diff\sigma.
	\end{align}
	Combining \eqref{sign-changing-18} and \eqref{sign-changing-19} yields
	\begin{equation}\label{sign-changing-20}
		\begin{aligned}
			&E_0(\gamma_1(s),0)\\
			&=\int_{\partial\Omega} \left(\frac{1}{p_1}(1-2s)^{p_1} g_{s_1}(x,u_-,0)u_--g(x,(1-2s)u_-,0)\right)\,\diff\sigma.
		\end{aligned}
	\end{equation}
	We observe that
	\begin{equation}\label{sign-changing-21}
		\begin{aligned}
			&\int_{\partial\Omega} \frac{1}{p_1}(1-2s)^{p_1} g_{s_1}(x,u_-,0)u_-\,\diff\sigma\\
			&=\int_{\partial\Omega} \int_0^1\frac{\partial}{\partial t}\frac{t^{p_1}}{p_1}(1-2s)^{p_1}g_{s_1}(x,u_-,0)u_-\,\mathrm{d}t\diff\sigma\\
			&=\int_{\partial\Omega} \int_0^1t^{p_1-1}(1-2s)^{p_1}g_{s_1}(x,u_-,0)u_-\,\mathrm{d}t\diff\sigma
		\end{aligned}
	\end{equation}
	and
	\begin{equation}\label{sign-changing-22}
		\begin{aligned}
			\int_{\partial\Omega} g(x,(1-2s)u_-,0)\,\diff\sigma
			&=\int_{\partial\Omega}\int_0^1\frac{\partial}{\partial t} g(x,t(1-2s)u_-,0)\,\mathrm{d}t\diff\sigma\\
			&=\int_{\partial\Omega}\int_0^1 g_{s_1}(x,t(1-2s)u_-,0)(1-2s)u_-\,\mathrm{d}t\diff\sigma.
		\end{aligned}
	\end{equation}
	Using \eqref{sign-changing-21} and \eqref{sign-changing-22} in \eqref{sign-changing-20} and hypothesis \eqref{H5} (i) leads to
	\begin{equation*}
		\begin{aligned}
			&E_0(\gamma_1(s),0)\\
			&=\int_{\partial\Omega}\int_0^1 (1-2s)u_-\left((t(1-2s))^{p_1-1}g_{s_1}(x,u_-,0)-g_{s_1}(x,t(1-2s)u_-,0)\right)\,\mathrm{d}t\diff\sigma\\
			&\leq 0.
		\end{aligned}
	\end{equation*}
	Using similar arguments, one can show that $E_0(\gamma_1(s),0)\leq 0$ for $[\frac{1}{2},1]$. Hence, we have shown that \eqref{sign-changing-24} is satisfied. This completes the proof.
\end{proof}

\begin{example}
	For the sake of simplicity, we have omitted the $x$-dependence on $g$  and consider the problem
	\begin{equation*}
		\begin{aligned}
			-\Delta_{p_1}u_1
			& =-|u_1|^{p_1-2}u_1 \quad && \text{in } \Omega,\\
			-\Delta_{p_2}u_2
			& =-|u_2|^{p_2-2}u_2 \quad && \text{in } \Omega,\\
			|\nabla u_1|^{p_1-2}\nabla u_1 \cdot \nu
			&=-\alpha(p_1+q_1)|u_1|^{p_1+q_1-2}u_1 & & \\
			&\quad+\beta p_1(u_1^+)^{p_1-1}(u_2^+)^{p_2}+\gamma p_1|u_1|^{p_1-2}u_1&& \text{on } \partial\Omega,\\
			|\nabla u_2|^{p_2-2}\nabla u_2 \cdot \nu
			&=-\alpha(p_2+q_2)|u_2|^{p_2+q_2-2}u_2 &&\\
			&\quad+\beta p_2(u_2^+)^{p_2-1}(u_1^+)^{p_1}+\gamma p_2|u_2|^{p_2-2}u_2  && \text{on } \partial\Omega,
		\end{aligned}
	\end{equation*}
	with constants $p_1, p_2\geq 2$, $\alpha,\beta, q_1,q_2>0$ and
	\begin{align*}
		\gamma>\max\left\{\frac{\lambda_{1,p_1}}{p_1},\frac{\lambda_{2,p_2}}{p_2}\right\}.
	\end{align*}
	Then, the potential is given by
	\begin{align*}
		g(s_1,s_2)=-\alpha\left(|s_1|^{p_1+q_1}+|s_2|^{p_2+q_2}\right)+\beta(s_1^+)^{p_1}(s_2^+)^{p_2}+\gamma\left(|s_1|^{p_1}+|s_2|^{p_2}\right)
	\end{align*}
	with the partial derivatives
	\begin{align*}
		g_{s_1}(s_1,s_2)&=g_1(s_1,s_2)\\
		&=-\alpha(p_1+q_1)|s_1|^{p_1+q_1-2}s_1+\beta p_1(s_1^+)^{p_1-1}(s_2^+)^{p_2}+\gamma p_1|s_1|^{p_1-2}s_1,\\
		g_{s_2}(s_1,s_2)&=g_2(s_1,s_2)\\
		&=-\alpha(p_2+q_2)|s_2|^{p_2+q_2-2}s_2+\beta p_2(s_1^+)^{p_1}(s_2^+)^{p_2-1}+\gamma p_2|s_2|^{p_2-2}s_2.
	\end{align*}
	Then, for any constants $k_1, k_2>0$ and $d_1, d_2<0$, Hypotheses \eqref{H0}--\eqref{H6} are satisfies provided $\alpha>0$ is sufficiently large. Let us prove this for $g_1$, the same arguments can be used for $g_2$.

	Since $g_1$ is continuous in $(s_1,s_2)\in\R\times\R$, it is a Carath\'eodory function. Furthermore we have with
	\begin{align*}
		s:=\max_{(s_1,s_2)\in M}\{|s_1|,|s_2|\},
	\end{align*}
	where $M$ is a bounded set, that
	\begin{align*}
		|g_1(s_1,s_2)|&\le \alpha(p_1+q_1)s^{p_1+q_1-1}+\beta p_1 s^{p_1+p_2-1}+\gamma p_1 s^{p_1-1}\\
		&:=C\in L^\infty(\partial\Omega).
	\end{align*}

	Furthermore, for $t_1,s_1,t_2,s_2\in[-K_1,K_1]$, we have
	\begin{equation}\label{EstimateExample}
		\begin{aligned}
		&|g_1(s_1,t_1)-g_1(s_2,t_2)|\\
		&\le\alpha(p_1+q_1)||s_1|^{p_1+q_1-2}s_1-|s_2|^{p_1+q_1-2}s_2|\\
		&\quad+\beta p_1|(s_1^+)^{p_1-1}(t_1^+)^{p_2}-(s_2^+)^{p_1-1}(t_2^+)^{p_2}|\\
		&\quad+\gamma p_1||s_1|^{p_1-2}s_1-|s_2|^{p_1-2}s_2|\\
		&\le	\alpha(p_1+q_1)||s_1|^{p_1+q_1-2}s_1-|s_2|^{p_1+q_1-2}s_2|\\
		&\quad+\beta p_1\Big(|(s_1^+)^{p_1-1}(t_1^+)^{p_2} -(s_1^+)^{p_2-1}(t_2^+)^{p_2}|\\
		&\qquad \qquad
		+|(s_1^+)^{p_2-1}(t_2^+)^{p_2}-(s_2^+)^{p_1-1}(t_2^+)^{p_2}|\Big)\\
		&\quad+\gamma p_1||s_1|^{p_1-2}s_1-|s_2|^{p_1-2}s_2|\\
		&\le\alpha(p_1+q_1)||s_1|^{p_1+q_1-2}s_1-|s_2|^{p_1+q_1-2}s_2|\\
		&\quad+\beta p_1\Big(|(s_1^+)|^{p_1-1}|(t_1^+)^{p_2} -(t_2^+)^{p_2}|
		+|(t_2^+)^{p_2}||(s_1^+)^{p_2-1}-(s_2^+)^{p_1-1}|\Big)\\
		&\quad+\gamma p_1||s_1|^{p_1-2}s_1-|s_2|^{p_1-2}s_2|\\
		&\le\alpha(p_1+q_1)||s_1|^{p_1+q_1-2}s_1-|s_2|^{p_1+q_1-2}s_2|\\
		&\quad+\beta p_1\Big(K_1^{p_1-1}|(t_1^+)^{p_2} -(t_2^+)^{p_2}|
		+K_1^{p_2}|(s_1^+)^{p_2-1}-(s_2^+)^{p_1-1}|\Big)\\
		&\quad+\gamma p_1||s_1|^{p_1-2}s_1-|s_2|^{p_1-2}s_2|.
		\end{aligned}
	\end{equation}
	Now we consider the function $f(x):=|x|^Px$ for $x\in[-K_1,K_1]$. Because of $f'(x)=(P+1)|x|^P$ we see that $f$ is continuously differentiable with $\sup|f'(x)|=(P+1)K_1^P$, so it is a Lipschitz constant. Hence
	\begin{align*}
		|f(x_1)-f(x_2)|\le (P+1)K_1^P|x_1-x_2|.
	\end{align*}
	This can be used for the first and third term in \eqref{EstimateExample}. For the second term we consider $\tilde f(x)=|x|^Q$ for $x\in[-K_1,K_1]\setminus\{0\}$. In a similar way it then can be shown that $sup \tilde f'(x)=\sup Q|x|^{Q-2}x\le QK_1^{Q-1}$ and therefore
	\begin{align*}
		|\tilde f(x_1)-\tilde f(x_2)|\le QK_1^{Q-1}|x_1-x_2|.
	\end{align*}
	This helps to estimate the second term in \eqref{EstimateExample} for $s_1,s_2,t_1,t_2\in[-K_1,K1]\setminus\{0\}$. In the case where $t_1=t_2=0$ or $t_1>0$ and $t_2<0$ we directly get $|(t_1^+)^{p_2}+(t_2^+)^{p_2}|\le |t_1-t_2|$. So, all together yields
	\begin{align*}
		&|g_1(s_1,t_1)-g_1(s_2,t_2)|\\
		&\le\alpha(p_1+q_1)||s_1|^{p_1+q_1-2} s_1-|s_2|^{p_1+q_1-2}s_2|\\
		&\quad+\beta p_1\Big(K_1^{p_1-1}| (t_1^+)^{p_2} -(t_2^+)^{p_2}|
		+K_1^{p_2}|(s_1^+)^{p_2-1} -(s_2^+)^{p_1-1}|\Big)\\
		&\quad+\gamma p_1||s_1|^{p_1-2} s_1-|s_2|^{p_1-2}s_2|\\
		&\le \alpha(p_1+q_1)(p_1+q_1-2) K_1^{p_1+q_1-2}|s_1-s_2|\\
		&\quad+\beta p_1\Big(K_1^{p_1-1}p_2K_1^{p_2-1}|t_1-t_2| +K_1^{p_2}(p_2-1)K_1^{p_2-2}|s_1-s_2|\Big)\\
		&\quad+\gamma p_1(p_1-2+1)K_1^{p_1-2}|s_1-s_2|\\
		&\le L_1(|s_1-s_2|+|t_1-t_2|)
	\end{align*}
	for $L_1>0$ sufficiently large. This shows \eqref{H0}.

	Let $k_1, k_2>0$ and $s_2\in[0,k_2]$. Then
	\begin{align*}
		g_1(k_1,s_2)&=-\alpha(p_1+q_1)k_1^{p_1+q_1-1}+\beta p_1 k_1^{p_1-1}(s_2^+)^{p_2}+\gamma p_1 k_1^{p_1-1}\\
		&\le -\alpha(p_1+q_1)k_1^{p_1+q_1-1}+\beta p_1 k_1^{p_1-1}k_2^{p_2}+\gamma p_1 k_1^{p_1-1}\\
		&\le 0
	\end{align*}
	for $\alpha>0$ sufficiently large. Let $d_1, d_2<0$ and $s_2\in[d_2,0]$. Then
	\begin{align*}
		g_1(x,d_1,s_2)=-\alpha(p_1+q_1)|d_1|^{p_1+q_1-2}d_1+\gamma p_1|d_1|^{p_1-2}d_1\ge0
	\end{align*}
	for $\alpha>0$ sufficiently large. This shows \eqref{H1}.

	For every $s_2\in(0,k_2]$ we have that
	\begin{align*}
		&\liminf_{s_1\to0^+}\frac{g_1(s_1,s_2)}{s_1^{p_1-1}}\\
		&=\liminf_{s_1\to0^+}\frac{-\alpha(p_1+q_1)|s_1|^{p_1+q_1-2}s_1+\beta p_1(s_1^+)^{p_1-1}(s_2^+)^{p_2}+\gamma p_1|s_1|^{p_1-2}s_1}{s_1^{p_1-1}}\\
		&=\liminf_{s_1\to0^+}\left(-\alpha(p_1+q_1)s_1^{q_1}+\beta p_1(s_2^+)^{p_2}+\gamma p_1\right)\\
		&=\beta p_1(s_2^+)^{p_2}+\gamma p_1\\
		&=\limsup_{s_1\to0^+}\frac{g_1(s_1,s_2)}{s_1^{p_1-1}}.
	\end{align*}
	Therefore there exist constants $\hat c_1, \hat \alpha_1>0$ with $\hat\alpha_1\ge\hat c_1>\lambda_{1,p_1}$ such that
	\begin{align*}
		\hat c_1\le \beta p_1(s_2^+)^{p_2}+\gamma p_1\le\hat\alpha_1.
	\end{align*}
	Furthermore, for every $s_2\in [d_2,0)$, we have
	\begin{align*}
		&\liminf_{s_1\to0^-}\frac{g_1(s_1,s_2)}{|s_1|^{p_1-2}s_1}\\
		&=\liminf_{s_1\to0^-}\frac{-\alpha(p_1+q_1)|s_1|^{p_1+q_1-2}s_1+\gamma p_1|s_1|^{p_1-2}s_1}{|s_1|^{p_1-2}s_1}\\
		&=\liminf_{s_1\to0^-}\left(-\alpha(p_1+q_1)|s_1|^{q_1}+\gamma p_1\right)\\
		&=\gamma p_1\\
		&=\limsup_{s_1\to0^-}\frac{g_1(s_1,s_2)}{|s_1|^{p_1-2}s_1}.
	\end{align*}
	Thus, there exist constants $\tilde c_1, \tilde \alpha_1>0$ with $\tilde\alpha_1\ge\tilde c_1>\lambda_{1,p_1}$ such that
	\begin{align*}
		\tilde c_1\le \gamma p_1\le\tilde\alpha_1.
	\end{align*}
	We now set
	\begin{align*}
		c_1:=\min\{\hat c_1,\tilde c_2\}\quad\text{and}\quad \alpha_1:=\max\{\hat\alpha_1,\tilde\alpha_2\}
	\end{align*}
	to get \eqref{H2} and \eqref{H3}.

	Let $x\in \partial\Omega$ and $s_1\in[d_1,k_1]$. We want to show that $g_{s_1}(s_1,\cdot)$ is nondecreasing on the interval $[d_2,k_2]$. For this let $\tilde s_2, \hat s_2\in[d_2,k_2]$ with $\tilde s_2\le \hat s_2$. Then
	\begin{align*}
		&g_{s_1}(s_1,\tilde s_2)\\
		&=-\alpha(p_1+q_1)|s_1|^{p_1+q_1-2}s_1+\beta p_1(s_1^+)^{p_1-1}(\tilde s_2^+)^{p_2}+\gamma p_1|s_1|^{p_1-2}s_1\\
		&\le -\alpha(p_1+q_1)|s_1|^{p_1+q_1-2}s_1+\beta p_1(s_1^+)^{p_1-1}(\hat s_2^+)^{p_2}+\gamma p_1|s_1|^{p_1-2}s_1\\
		&= g_{s_1}(s_1,\hat s_2).
	\end{align*}
	This shows \eqref{H4}.

	Let $t\in[0,1]$. Then we have
	\begin{align*}
		&g_{s_1}(ts_1,0)\\
		&=-\alpha(p_1+q_1)t^{p_1+q_1-1}|s_1|^{p_1+q_1-2}s_1+\gamma p_1t^{p_1-1}|s_1|^{p_1-2}s_1\\
		&= t^{p_1-1}\left(-\alpha(p_1+q_1)t^{q_1}|s_1|^{p_1+q_1-2}s_1+\gamma p_1|s_1|^{p_1-2}s_1\right).
	\end{align*}
	Therefore
	\begin{align*}
		g_{s_1}(ts_1,0) &\le t^{p_1-1}g_{s_1}(\cdot, s_1,0)\quad\text{for } d_1\le s_1\le0,\\
		g_{s_1}(ts_1,0) &\ge t^{p_1-1}g_{s_1}(\cdot, s_1,0),\quad\text{for } 0\le s_1\le k_1.
	\end{align*}
	This shows \eqref{H5}. Hypothesis \eqref{H6} can be shown in a similar way.
\end{example}



\begin{thebibliography}{99}

\bibitem{Bai-Gasinski-Winkert-Zeng-2020}
	Y. Bai, L. Gasi\'{n}ski, P. Winkert, S. Zeng,
	{\it $W^{1,p}$ versus $C^1$: the nonsmooth case involving critical growth},
	Bull. Math. Sci. {\bf 10} (2020), no. 3, 2050009, 15 pp.

\bibitem{Carl-2012-a}
	S. Carl,
	{\it Addendum to: Variational approach to noncoercive systems of elliptic variational inequalities via trapping region},
	Complex Var. Elliptic Equ. {\bf 57} (2012), no. 7-8, 921--924.

\bibitem{Carl-2012-b}
	S. Carl,
	{\it Variational approach to noncoercive systems of elliptic variational inequalities via trapping region},
	Complex Var. Elliptic Equ. {\bf 57} (2012), no. 2-4, 169--183.

\bibitem{Carl-Le-Motreanu-2007}
	S. Carl, V.K. Le, D. Motreanu,
	``Nonsmooth Variational Problems and Their Inequalities'',
	Springer, New York, 2007.

\bibitem{Carl-Motreanu-2017}
	S. Carl, D. Motreanu,
	{\it Extremal solutions for nonvariational quasilinear elliptic systems via expanding trapping regions},
	Monatsh. Math. {\bf 182} (2017), no. 4, 801--821.

\bibitem{Carl-Motreanu-2015}
	S. Carl, D. Motreanu,
	{\it Multiple solutions for elliptic systems via trapping regions and related nonsmooth potentials},
	Appl. Anal. {\bf 94} (2015), no. 8, 1594--1613.

\bibitem{Chabrowski-2011}
	J. Chabrowski,
	{\it On the Neumann problem for systems of elliptic equations involving homogeneous nonlinearities of a critical degree},
	Colloq. Math. {\bf 125} (2011), no. 1, 115--127.

\bibitem{Chang-1981}
	K.C. Chang,
	{\it Variational methods for nondifferentiable functionals and their applications to partial differential equations},
	J. Math. Anal. Appl. {\bf 80} (1981), no. 1, 102--129.

\bibitem{Clarke-1990}
	F.H. Clarke,
	``Optimization and Nonsmooth Analysis'',
	Society for Industrial and Applied Mathematics (SIAM), Philadelphia, PA, 1990.

\bibitem{deGodoi-Miyagaki-Rodrigues-2016}
	J.D.B. de Godoi, O.H. Miyagaki, R.S. Rodrigues,
	{\it A class of nonlinear elliptic systems with Steklov-Neumann nonlinear boundary conditions},
	Rocky Mountain J. Math. {\bf 46} (2016), no. 5, 1519--1545.

\bibitem{deMoraisFilho-Souto-1999}
	D.C. de Morais Filho, M.A.S. Souto,
	{\it Systems of $p$-Laplacean equations involving homogeneous nonlinearities with critical Sobolev exponent degrees},
	Comm. Partial Differential Equations {\bf 24} (1999), no. 7-8, 1537--1553.

\bibitem{ElManouni-Papageorgiou-Winkert-2015}
	S. El Manouni, N.S. Papageorgiou, P. Winkert,
	{\it Parametric nonlinear nonhomogeneous Neumann equations involving a nonhomogeneous differential operator},
	Monatsh. Math. {\bf 177} (2015), no. 2, 203--233.

\bibitem{Fernandez-Bonder-Rossi-2002}
	J. Fern\'{a}ndez Bonder, J.D. Rossi,
	{\it A nonlinear eigenvalue problem with indefinite weights related to the {S}obolev trace embedding},
	Publ. Mat. {\bf 46} (2002), no. 1, 221--235.

\bibitem{Frisch-Winkert-2023}
	M.M. Frisch, P. Winkert,
	{\it Boundedness, existence and uniqueness results for coupled gradient dependent elliptic systems with nonlinear boundary condition}, preprint, 2023.

\bibitem{Gasinski-Papageorgiou-2005}
	L. Gasi\'{n}ski, N.S. Papageorgiou,
	{\it Nonsmooth critical point theory and nonlinear boundary value problems},
	Chapman \& Hall/CRC, Boca Raton, FL, 2005.

\bibitem{Gambera-Marano-Motreanu-2024}
	L. Gambera, S.A. Marano, D. Motreanu,
	{\it Quasilinear Dirichlet systems with competing operators and convection},
	J. Math. Anal. Appl. {\bf 530} (2024), no. 2, Paper No. 127718, 11.

\bibitem{Guarnotta-Livrea-Winkert-2023}
	U. Guarnotta, R. Livrea, P. Winkert,
	{\it The sub-supersolution method for variable exponent double phase systems with nonlinear boundary conditions},
	Atti Accad. Naz. Lincei Rend. Lincei Mat. Appl., accepted 2023.

\bibitem{Guarnotta-Marano-2021-a}
	U. Guarnotta, S.A. Marano,
	{\it Infinitely many solutions to singular convective {N}eumann systems with arbitrarily growing reactions},
	J. Differential Equations {\bf 271} (2021), 849--863.

\bibitem{Guarnotta-Marano-2021-b}
	U. Guarnotta, S.A. Marano,
	{\it Corrigendum to ``{I}nfinitely many solutions to singular convective {N}eumann systems with arbitrarily growing reactions'' [J. Differential Equations 271 (2021) 849--863]},
	J. Differential Equations {\bf 274} (2021), 1209--1213.

\bibitem{Guarnotta-Marano-Moussaoui-2022}
	U. Guarnotta, S.A. Marano, A. Moussaoui,
	{\it Singular quasilinear convective elliptic systems in $\mathbb{R}^{N}$},
	Adv. Nonlinear Anal. {\bf 11} (2022), no. 1, 741--756.

\bibitem{Hai-Shivaji-2004}
	D.D. Hai, R. Shivaji,
	{\it An existence result on positive solutions for a class of $p$-Laplacian systems},
	Nonlinear Anal. {\bf 56} (2004), no. 7, 1007--1010.

\bibitem{Le-2006}
	A. L\^{e},
	{\it Eigenvalue problems for the {$p$}-{L}aplacian},
	Nonlinear Anal. {\bf 64} (2006), no. 5, 1057--1099.

\bibitem{Lieberman-1988}
	G.M. Lieberman,
	{\it Boundary regularity for solutions of degenerate elliptic equations},
	Nonlinear Anal. {\bf 12} (1988), no. 11, 1203--1219.

\bibitem{Liu-Nguyen-Winkert-Zeng-2023}
	Y. Liu, V.T. Nguyen, P. Winkert, S. Zeng,
	{\it Coupled double phase obstacle systems involving nonlocal functions and multivalued convection terms},
	Monatsh. Math. {\bf 202} (2023), no. 2, 363--376.

\bibitem{Marino-Winkert-2020}
	G. Marino, P. Winkert,
	{\it Existence and uniqueness of elliptic systems with double phase operators and convection terms},
	J. Math. Anal. Appl. {\bf 492} (2020), no. 1, 124423, 13 pp.

\bibitem{Martinez-Rossi-2004}
	S.R. Mart\'{\i}nez, J.D. Rossi,
	{\it On the {F}u\v{c}ik spectrum and a resonance problem for the $p$-Laplacian with a nonlinear boundary condition},
	Nonlinear Anal. {\bf 59} (2004), no. 6, 813--848.

\bibitem{Motreanu-Moussaoui-Pereira-2018}
	D. Motreanu, A. Moussaoui, D.S. Pereira,
	{\it Multiple solutions for nonvariational quasilinear elliptic systems},
	Mediterr. J. Math. {\bf 15} (2018), no. 3, Paper No. 88, 14.

\bibitem{Motreanu-Perera-2009}
	D. Motreanu, K. Perera,
	{\it Multiple nontrivial solutions of Neumann $p$-Laplacian systems},
	Topol. Methods Nonlinear Anal. {\bf 34} (2009), no. 1, 41--48.

\bibitem{Motreanu-Radulescu-2003}
	D. Motreanu, V.D. R\u{a}dulescu,
	``Variational and Non-Variational Methods in Nonlinear Analysis and Boundary Value Problems''
	Kluwer Academic Publishers, Dordrecht, 2003.

\bibitem{Motreanu-Vetro-Vetro-2016}
	D. Motreanu, C. Vetro, F. Vetro,
	{\it A parametric Dirichlet problem for systems of quasilinear elliptic equations with gradient dependence},
	Numer. Funct. Anal. Optim. {\bf 37} (2016), no. 12, 1551--1561.

\bibitem{Motreanu-Vetro-Vetro-2018}
	D. Motreanu, C. Vetro, F. Vetro,
	{\it Systems of quasilinear elliptic equations with dependence on the gradient via subsolution-supersolution method},
	Discrete Contin. Dyn. Syst. Ser. S {\bf 11} (2018), no. 2, 309--321.

\bibitem{Papageorgiou-Winkert-2018}
	N.S. Papageorgiou, P. Winkert,
	``Applied Nonlinear Functional Analysis'',
	De Gruyter, Berlin, 2018.

\bibitem{Vazquez-1984}
	J.L. V\'{a}zquez,
	{\it A strong maximum principle for some quasilinear elliptic equations},
	Appl. Math. Optim. {\bf 12} (1984), no. 3, 191--202.

\bibitem{Winkert-PHD-2009}
	P. Winkert,
	``Comparison Principles and Multiple Solutions for Nonlinear Elliptic Equations,"
	Ph.D. Thesis, Department of Mathematics, University of Halle, Halle, Germany, 2009.

\bibitem{Winkert-2010}
	P. Winkert,
	{\it Constant-sign and sign-changing solutions for nonlinear elliptic equations with {N}eumann boundary values},
	Adv. Differential Equations {\bf 15} (2010), no. 5-6, 561--599.

\end{thebibliography}
\end{document}